\numberwithin{equation}{section}
\newtheorem{thm}{Theorem}
\newtheorem{lem}{Lemma}[section]
\newtheorem{prop}[lem]{Proposition}
\newtheorem{cor}[lem]{Corollary}
\newtheorem{defn}[lem]{Definition}
\newdefinition{rem}[lem]{Remark}
\newdefinition{ex}[lem]{Example}
\newproof{pot1}{Proof of Theorem \ref{Thm1}}
\newproof{pot2}{Proof of Theorem \ref{Thm2}}
\newproof{pot3}{Proof of Theorem \ref{Thm3}}
\journal{Journal of Algebra}
\newcommand{\Au}{\mathrm{Aut}}
\newcommand{\ad}{\mathrm{ad}}
\newcommand{\bd}{\bar{d}}
\newcommand{\bl}{\bar{L}}
\newcommand{\co}{\coloneqq}
\newcommand{\D}{\mathrm{Der}}
\newcommand{\di}{\mathrm{diag}}
\newcommand{\F}{\mathbb{F}}
\newcommand{\g}{\mathfrak{g}}
\newcommand{\GL}{\mathrm{GL}}
\newcommand{\h}{\mathfrak{h}}
\newcommand{\hk}{H^1(K, K)}
\newcommand{\I}{\mathrm{Im} \,}
\newcommand{\Lnone}{$\mathrm{Lie}(n+1,n)$}
\newcommand{\Lntwo}{$\mathrm{Lie}(n+2,n)$}
\newcommand{\Lntwoa}{$\mathrm{Lie}_\ad(n+2,n)$}
\newcommand{\Lntwoc}{$\mathrm{Lie}_c(n+2,n)$}
\newcommand{\Li}{\mathrm{Lie}(n+1,n)}
\newcommand{\li}{\mathrm{Lie}(n+2,n)}
\newcommand{\lia}{\mathrm{Lie}_\ad(n+2,n)}
\newcommand{\lic}{\mathrm{Lie}_c(n+2,n)}
\newcommand{\M}{\mathrm{Mat}}
\newcommand{\R}{\mathbb{R}}
\newcommand{\ra}{\mathrm{rank}\,}
\newcommand{\s}{\mathrm{span}}
\newcommand{\td}{\tilde{d}}
\newcommand{\ts}{\tilde{\sigma}}
\newcommand{\Z}{\mathcal{Z}}
\begin{document}

\begin{frontmatter}

%% Title, authors and addresses

%% use the tnoteref command within \title for footnotes;
%% use the tnotetext command for theassociated footnote;
%% use the fnref command within \author or \address for footnotes;
%% use the fntext command for theassociated footnote;
%% use the corref command within \author for corresponding author footnotes;
%% use the cortext command for theassociated footnote;
%% use the ead command for the email address,
%% and the form \ead[url] for the home page:
%% \title{Title\tnoteref{label1}}
%% \tnotetext[label1]{}
%% \author{Name\corref{cor1}\fnref{label2}}
%% \ead{email address}
%% \ead[url]{home page}
%% \fntext[label2]{}
%% \cortext[cor1]{}
%% \address{Address\fnref{label3}}
%% \fntext[label3]{}

\title{On the problem of classifying solvable Lie algebras having small codimensional derived algebras}

%% use optional labels to link authors explicitly to addresses:

\author[1]{Hoa Q. Duong}
\ead{hoa.duongquang@hoasen.edu.vn}
\address[1]{Faculty of Information Technology, Hoa Sen University, Ho Chi Minh City, Vietnam}

\author[2]{Vu A. Le}
\ead{vula@uel.edu.vn}
\address[2]{Department of Economic Mathematics, University of Economics and Law, 
Vietnam National University -- Ho Chi Minh City, Vietnam}

\author[3]{Tuan A. Nguyen}
\ead{natuan@upes.edu.vn}
\address[3]{Faculty of Political Science and Pedagogy, Ho Chi Minh City University of Physical Education and Sport, Vietnam}

\author[4]{Hai T. T. Cao}
\ead{tuhai.thptlequydon@ninhthuan.edu.vn}
\address[4]{Department of Mathematics and Informatics, Ho Chi Minh City University of Education, Vietnam}

\author[5]{Thieu N. Vo\corref{cor1}}
\ead{vongocthieu@tdtu.edu.vn}
\address[5]{Fractional Calculus, Optimization and Algebra Research Group, Faculty of Mathematics and Statistics, 
Ton Duc Thang University, Ho Chi Minh City, Vietnam}
\cortext[cor1]{Corresponding author.}

\begin{abstract}
This paper concerns the problem of classifying finite-dimensional real solvable Lie algebras 
whose derived algebras are of codimension 1 or 2. On the one hand, we present an effective 
method to classify all $(n+1)$-dimensional real solvable Lie algebras having 1-codimensional 
derived algebras provided that a full classification of $n$-dimensional nilpotent Lie algebras is given.
On the other hand, the problem of classifying all $(n+2)$-dimensional real solvable Lie algebras 
having 2-codimensional derived algebras is proved to be wild. In this case, we provide a method 
to classify a subclass of the considered Lie algebras which are extended from their derived algebras 
by a pair of derivations containing at least one inner derivation.
\end{abstract}

\begin{keyword}
%% keywords here, in the form: keyword \sep keyword
Lie algebra \sep derived algebra \sep wild problem.

%% PACS codes here, in the form: \PACS code \sep code

%% MSC codes here, in the form: \MSC code \sep code
%% or \MSC[2008] code \sep code (2000 is the default)
\MSC[2010] 15A21 \sep 16G60 \sep 17B30 \sep 20G05.
\end{keyword}

\end{frontmatter}

%% \linenumbers

%% main text
%\tableofcontents
%%%Section 1
\section{Introduction}\label{sec1}

%About 1972, Donovan and Freislich \cite{DF72,DF73} introduced the notion of \emph{wildness}. 
A classification problem is called to be \emph{wild} if it contains the problem of classifying pairs of matrices up to 
simultaneous similarity (see \cite{DF72,DF73}).
According to Belitskii and Sergeichuk \cite[Section 1]{BS03}, wild problems are 
hopeless in a certain sense. Several classification problems were pointed out to be wild 
(see \cite{BDLST09,BLS05,BP19,BS03,FKPS18,Ser00} and references therein).

Unfortunately, the problem of classifying solvable Lie algebras is wild. Indeed, Belitskii et al. \cite[Theorem 4]{BDLST09} 
proved that the problem of classifying 2-step nilpotent Lie algebras 
(over an algebraically closed field of characteristic other than two) with 3-dimensional derived algebras is wild. 
%Consequently, the problem of classifying 2-step nilpotent Lie algebras is wild.
Then so is the problem of classifying all nilpotent Lie algebras.
Since the problem of classifying solvable Lie algebras contains 
the problem of classifying nilpotent Lie algebras, the former problem is wild, too.
As a consequence, the problem of classifying solvable Lie algebras is very difficult.
Although several partial results were obtained in low dimensions (see \cite{SW14}), a complete classification of solvable Lie algebras does not exist so far.

Recently, the problem of classifying solvable Lie algebras with a given derived algebra has been extensively studied. % (see Section \ref{sec2} for these notions). 
Real solvable Lie algebras with $1$-dimensional derived algebras are completely classified by Sch{\" o}bel \cite{Sch93}.
%They consist of the real affine Lie algebras, the real Heisenberg Lie algebras, and the trivial extensions of the derived algebras by commutative Lie algebras.
Partial results on classifications of solvable Lie algebras with $2$-dimensional derived algebras was obtained in \cite{Ebe03, Jan10, Sch93}.
Schur \cite{Sch05} and Jacobson \cite{Jac44} investigated formulas for determining the maximal dimension of a commutative subalgebra of a matrix Lie algebra.
Based on the results of \cite{Jac44, Sch05}, a full classification for real solvable Lie algebras with $2$-dimensional derived algebras was achieved in \cite{VTTTT19}.
To the best of our knowledge, the problem of classifying real solvable Lie algebras with the derived algebras of dimension $\neq 1,2$ still remains open.
%Classifications for real solvable Lie algebra whose derived algebras are of dimensions 1 and 2 
%were studied in \cite{VHTHT16,VTTTT19}. The present paper can be viewed as a continuation 
%in which we establish the results for those with 1- and 2-codimensional derived algebras.

This paper aims to study the classification problems for real solvable Lie algebras with high dimensional derived algebras.
%A Lie algebra over a field of characteristic zero is solvable if and only if its derived algebra is nilpotent 
%(see Jacobson \cite{Jac62}). Combining with simple facts of de Graaf \cite{Gra05} on the semi-direct 
%sum of Lie algebras, real solvable Lie algebras with 1- or 2-codimensional derived algebras can be seen 
%as extensions of real nilpotent ones by suitable derivations. Hence, the problem of classifying 
%such Lie algebras is reduce to a certain matrix problem. 
We denote by {\Lnone} (resp., {\Lntwo}) 
the class of $(n+1)$-dimensional (resp., $(n+2)$-dimensional) real solvable Lie algebras whose derived algebras are of dimension $n$. 
Three main theorems of the paper are as follows.

First of all, for a given $n$-dimensional real nilpotent Lie algebra $K$, each Lie algebra in 
{\Lnone} admitting $K$ as the derived algebra is an extension of $K$ by a derivation of $K$. 
We point out that the derivation of this extension must be an outer derivation.
However, a Lie algebra extended from $K$ by an outer derivation of $K$ is not necessary in {\Lnone}.
We give necessary and sufficient conditions for the derivation so that the extension is in {\Lnone} (see Proposition~\ref{Pro3.1}).
Furthermore, we prove that:
\begin{thm}\label{Thm1}
	For an arbitrary $n$-dimensional nilpotent Lie algebra $K$, the problem of classifying all Lie algebras 
	in {\Lnone} with the derived algebra $K$ is equivalent to the problem of classifying 
	outer derivations in the first cohomology space $H^1(K,K)$ satisfying equivalent conditions in Proposition~\ref{Pro3.1}, up to proportional similarity.
%	equivalent classes of outer derivations of the first cohomology space of $K$ satisfying equivalent conditions in Proposition~\ref{Pro3.1}, up to proportional similarity.
\end{thm}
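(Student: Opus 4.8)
The plan is to set up an explicit bijection between the isomorphism classes of Lie algebras in {\Lnone} having derived algebra $K$ and the proportional-similarity classes of admissible outer derivations, and then to verify that it is well-defined and bijective by a direct analysis of Lie algebra isomorphisms.

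First I would fix the two constructions that are to be mutually inverse. Given an outer derivation $d \in \D(K)$ whose class in $\hk$ satisfies the conditions of Proposition~\ref{Pro3.1}, form the semidirect sum $L_d \co K \oplus \R X$ whose bracket extends that of $K$ by $[X, k] \co d(k)$ for all $k \in K$; Proposition~\ref{Pro3.1} guarantees $L_d \in \Li$ with $[L_d, L_d] = K$. Conversely, given $L \in \Li$ with $[L,L] = K$, choose any $X \in L \setminus K$; then $d \co \ad_X|_K \in \D(K)$ and $L \cong L_d$. Since an inner derivation has image inside $[K,K] \subsetneq K$ (so would force $[L,L] \neq K$), the derivation $d$ is necessarily outer, and it satisfies Proposition~\ref{Pro3.1}; thus both constructions land in the asserted classes. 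The structural fact I would use throughout is that any Lie algebra isomorphism carries the derived algebra onto the derived algebra, so any isomorphism $\Psi \colon L_d \to L_{d'}$ satisfies $\Psi(K) = K$ and restricts to an automorphism $\phi \co \Psi|_K \in \Au(K)$.

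For the forward implication, let $\Psi \colon L_d \to L_{d'}$ be an isomorphism, and write $\Psi|_K = \phi \in \Au(K)$ and $\Psi(X) = \alpha X' + k_0$ with $\alpha \in \R^\times$ (nonzero since $\Psi$ is bijective and $X \notin K$) and $k_0 \in K$. The brackets among elements of $K$ are respected automatically because $\phi$ is an automorphism, so the only new constraint comes from the pairs $(X,k)$: comparing $\Psi([X,k]) = \phi(d(k))$ with $[\Psi(X), \Psi(k)] = \alpha\, d'(\phi(k)) + \ad_{k_0}(\phi(k))$ yields the operator identity
\begin{equation}\label{eq:forward}
	\phi\, d\, \phi^{-1} = \alpha\, d' + \ad_{k_0}.
\end{equation}
Denoting by $[\,\cdot\,]$ the class in $\hk$, passing to cohomology kills the inner term $\ad_{k_0}$ and gives $[d'] = \alpha^{-1}\,[\phi\, d\, \phi^{-1}]$, i.e.\ $d$ and $d'$ are proportionally similar. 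For the converse I would reverse the computation: if $[d'] = \lambda\,[\phi\, d\, \phi^{-1}]$ in $\hk$ for some $\lambda \in \R^\times$ and $\phi \in \Au(K)$, then $\lambda\, \phi\, d\, \phi^{-1} - d' = \ad_{k_1}$ for some $k_1 \in K$; setting $\alpha \co \lambda^{-1}$ and $k_0 \co \alpha k_1$ so that~\eqref{eq:forward} holds, I would define $\Psi|_K \co \phi$ and $\Psi(X) \co \alpha X' + k_0$ and verify that $\Psi$ is a Lie algebra isomorphism $L_d \to L_{d'}$. Well-definedness of the class attached to a fixed $L$ then follows because replacing $X$ by another complement generator $X'' = \beta X + k$ (with $\beta \neq 0$) changes $d$ to $\beta d + \ad_k$, hence only by proportional similarity.

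The main obstacle I anticipate is the bookkeeping at the level of $\hk$ rather than $\D(K)$: one must consistently track the inner derivations that arise (from the $K$-component $k_0$ of $\Psi(X)$ and from the freedom in choosing the complement $X$), confirm they vanish in cohomology, and at the same time carry the scalar $\alpha$ that forces the equivalence to be \emph{proportional} similarity rather than ordinary similarity. A secondary point to check is that the admissibility conditions of Proposition~\ref{Pro3.1} are themselves invariant under proportional similarity; this holds because those conditions concern the equality $d(K) + [K,K] = K$, which is preserved under scaling by $\R^\times$, conjugation by $\Au(K)$, and addition of inner derivations, so that the correspondence restricts correctly to admissible classes on both sides.
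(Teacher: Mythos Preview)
Your proposal is correct and follows essentially the same route as the paper: both arguments hinge on the fact that an isomorphism $L_{d}\to L_{d'}$ must carry $K=[L_d,L_d]$ onto $K=[L_{d'},L_{d'}]$, hence restrict to some $\phi\in\Au(K)$, and on the resulting identity $\phi\,d\,\phi^{-1}=\alpha\,d'+\ad_{k_0}$ (the paper's Proposition~\ref{Pro3.3}), which upon reduction modulo $\ad(K)$ becomes proportional similarity in $H^1(K,K)$. Your write-up is in fact slightly more explicit than the paper's in two places---checking that the class attached to $L$ is independent of the chosen complement $X$, and verifying that the admissibility condition $d(K)+K^1=K$ of Proposition~\ref{Pro3.1} is stable under scaling, $\Au(K)$-conjugation, and addition of inner derivations---but these are refinements of the same argument rather than a different approach.
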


Similarly, for a given $n$-dimensional real nilpotent Lie algebra $H$, each Lie algebra in {\Lntwo} 
admitting $H$ as its derived algebra is an extension of $H$ by a pair of derivations. 
However, not every extension of $H$ by a pair of derivations is in {\Lntwo}. We give necessary and sufficient 
conditions for the pair of derivations so that the extension is always in {\Lntwo} (see Proposition~\ref{Pro4.1}). 
Based on the conditions, we prove that:
%Using these conditions, we prove further that the problem of classifying {\Lntwo} contains the problem of classifying pairs of matrices up to weak similarity. 
%In association with Futorny et al. \cite[Theorem 1]{FKPS18}, we obtain:
\begin{thm}\label{Thm2}
	The problem of classifying {\Lntwo} is wild.
\end{thm}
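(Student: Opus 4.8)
The plan is to establish wildness directly from the definition: I would exhibit a reduction from the canonical wild problem---classifying pairs of square real matrices up to simultaneous similarity---to the problem of classifying \Lntwo. Concretely, I would construct an explicit family of Lie algebras $\g(A,B)\in\li$, indexed by pairs $(A,B)$ of $p\times p$ real matrices, and show that $\g(A,B)\cong\g(A',B')$ if and only if $(A,B)$ and $(A',B')$ are simultaneously similar, i.e.\ there is $P\in\GL(p,\R)$ with $PAP^{-1}=A'$ and $PBP^{-1}=B'$. Since such a family embeds the matrix-pair problem into the classification of \Lntwo, Theorem~\ref{Thm2} follows.

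For the construction I would take the derived algebra $H$ to be abelian of a suitable dimension $n$ (depending on $p$), so that every linear endomorphism of $H$ is a derivation and, crucially, every inner derivation vanishes. An element of \Lntwo{} with $H$ as derived algebra is then determined by a pair of outer derivations $d_1,d_2$ with $[d_1,d_2]=0$ attached to two extra generators $X_1,X_2$ with $[X_1,X_2]=0$; the requirement that the derived algebra be exactly $n$-dimensional amounts to $\I d_1+\I d_2=H$, and Proposition~\ref{Pro4.1} supplies precisely the conditions making such an extension lie in \Lntwo. Because inner derivations of a nilpotent Lie algebra are nilpotent, the commutator $[d_1,d_2]$ can never realize an arbitrary (non-nilpotent) commutator; hence $d_1,d_2$ must commute, and I would encode $(A,B)$ not as $(d_1,d_2)$ literally but through a commuting, jointly surjective pair obtained by a strict (Belitskii--Gelfand--Ponomarev type) encoding, in which $d_1$ carries a fixed rigid ``frame'' and $d_2$ carries the data of $(A,B)$ in distinguished blocks.

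Next I would analyse isomorphisms. Since the derived algebra $H=\g'$ is characteristic, any isomorphism $\phi\colon\g(A,B)\to\g(A',B')$ restricts to some $P\in\GL(H)$ and induces an invertible $T=(T_{ij})\in\GL(2,\R)$ on the quotient $\g/H\cong\s\{X_1,X_2\}$. Comparing the adjoint actions on $H$, and using that the $H$-components of $\phi(X_1),\phi(X_2)$ contribute only inner derivations---which are zero here---yields the equivalence $Pd_iP^{-1}=\sum_{j}T_{ji}d_j'$. Thus two members of the family are isomorphic exactly when their defining pairs agree up to simultaneous conjugation by $P$ together with the $\GL(2,\R)$-action that mixes and rescales $d_1,d_2$. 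The role of the rigid frame placed in $d_1$ is to force $T$ to be triangular and $P$ to be of the block form implementing ordinary conjugation on the distinguished blocks, so that the surviving equivalence on the blocks carrying $(A,B)$ is precisely simultaneous similarity.

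The hard part is making this encoding \emph{strict}, i.e.\ turning the reduction into a full and faithful correspondence. Two obstructions must be neutralized: the $\GL(2,\R)$-freedom to mix $d_1$ and $d_2$ (which, if left uncontrolled, could conjugate a pair into one with $d_1$ scalar and collapse the problem to the tame single-operator case), and the ambiguity of adding inner derivations and rescaling coming from Proposition~\ref{Pro4.1}. I would resolve both by choosing the frame in $d_1$ rigid enough that any admissible $(P,T)$ must preserve the block decomposition and act as the identity in the $X_1$-direction up to scalar, after which the residual action on $d_2$ is exactly conjugation by an element of $\GL(p,\R)$ applied to $A$ and $B$ simultaneously. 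Verifying this rigidity---that no exotic automorphism of $\g(A,B)$ can mix the blocks or the two generators---is the technical crux; once established, the family realizes the matrix-pair problem inside \Lntwo{} and proves wildness.
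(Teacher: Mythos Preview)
Your plan is sound in outline but takes a substantially harder path than the paper does. The paper does not attempt a strict reduction from simultaneous similarity at all. It restricts to the subclass $\lic$ with abelian derived algebra $H=\R^n$ and $[z,y]=0$, proves in Proposition~\ref{Pro4.5} that two such algebras are isomorphic precisely when their defining pairs $(d,d')$ are \emph{weakly} similar (conjugation by some $\sigma\in\GL_n(\R)$ combined with the $\GL_2(\R)$-action mixing $d$ and $d'$), and then simply quotes \cite[Theorem~1]{FKPS18}, which already states that classifying pairs of commuting matrices up to weak similarity is wild. Once Proposition~\ref{Pro4.5} is in hand, the proof of Theorem~\ref{Thm2} is a one-line citation.

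Your approach instead absorbs the $\GL_2(\R)$-freedom by hand, planting a rigid frame in $d_1$ so that any isomorphism must fix the $X_1$-direction up to scalar and act blockwise on $H$, leaving pure simultaneous similarity on the blocks carrying $(A,B)$. This is the classical Gelfand--Ponomarev/Belitskii encoding strategy, and carrying it out here would in effect re-prove (a case of) the result the paper cites from \cite{FKPS18}. The benefit is self-containment; the cost is that the step you correctly flag as the ``technical crux''---engineering the frame so that no automorphism of $\g(A,B)$ can mix $X_1,X_2$ nontrivially or disturb the block decomposition---is real work, and you must simultaneously keep $(d_1,d_2)$ commuting and jointly surjective on $H$ so that the resulting algebra actually lies in $\li$ by Proposition~\ref{Pro4.1}. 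None of this is wrong, but be aware that the paper sidesteps all of it by invoking the weak-similarity wildness theorem directly.
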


The wildness of the problem of classifying {\Lntwo} motivates us to consider special cases 
(see Belitskii et al. \cite[Section 3]{BLS05}). 
Based on the proof of Theorem~\ref{Thm2}, 
we consider the subclass \Lntwoa\ $\subseteq$ \Lntwo\  
containing the Lie algebras satisfying the condition: ``the pair of derivations of the extension 
contains at least one inner derivation".
For this subclass, we have:
\begin{thm}\label{Thm3}
	For an arbitrary $n$-dimensional nilpotent Lie algebra $H$, the problem of classifying all Lie algebras 
	in {\Lntwoa} with the derived algebra $H$ is equivalent to the problem of classifying 
	outer derivations in the first cohomology space $H^1(\R \oplus H,\R \oplus H)$ satisfying equivalent conditions in Proposition \ref{Pro4.1}, up to proportional similarity. 
%	equivalent classes of outer derivations of the first cohomology space of $\R \oplus H$ satisfying equivalent conditions in Proposition \ref{Pro4.1}, up to proportional similarity. 
\end{thm}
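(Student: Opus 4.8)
The plan is to trade the pair of derivations defining a member of $\lia$ for a single \emph{outer} derivation of the enlarged nilpotent algebra $\R\oplus H$, thereby recasting the problem in exactly the single-derivation form solved in Theorem~\ref{Thm1}. Let $L\in\lia$ have derived algebra $H$, and write $L=\R d_1\oplus\R d_2\oplus H$ as a vector space, where $D_i\co\ad_{d_i}|_H\in\D(H)$ and, by hypothesis, $D_2=\ad_h$ is inner for some $h\in H$. Replacing $d_2$ by $d_2'\co d_2-h$ kills the action on $H$, since $[d_2',x]=D_2(x)-[h,x]=0$ for every $x\in H$; hence $d_2'$ is central in $\tilde H\co\R d_2'\oplus H$. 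Because $\tilde H\supseteq[L,L]$ it is automatically an ideal of $L$, and as a Lie algebra $\tilde H\cong\R\oplus H$.

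Next I would record the residual structure: the remaining generator acts on $\tilde H$ by the derivation $D\co\ad_{d_1}|_{\tilde H}\in\D(\tilde H)$, and $L=\R d_1\ltimes_D\tilde H$. Exactly as in the proof of Theorem~\ref{Thm1}, $D$ must be outer: were $D=\ad_{\tilde h}$ inner, then $d_1-\tilde h$ would be central in $L$ and force $[L,L]=[\tilde H,\tilde H]=[H,H]\subsetneq H$ (strict since $H$ is nilpotent and nonzero), contradicting $[L,L]=H$. Conversely, from $[L,L]=\I D+[\tilde H,\tilde H]$ and the fact that $[\tilde H,\tilde H]=[H,H]$ is intrinsic to $\tilde H$, the requirement $[L,L]=H$ is precisely the condition cutting out the admissible derivations in Proposition~\ref{Pro4.1} (equivalently, that the map induced by $D$ on $\tilde H/[\tilde H,\tilde H]$ have image the distinguished hyperplane $H/[\tilde H,\tilde H]$). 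In particular $H=\I D+[\tilde H,\tilde H]$ is recovered intrinsically from $(\tilde H,D)$, so no extra data are needed to pin it down. This defines the assignment $L\mapsto[D]\in H^1(\R\oplus H,\R\oplus H)$.

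I would then show this assignment descends to a bijection between isomorphism classes in $\lia$ and proportional-similarity classes of admissible outer derivations. For the forward direction, an isomorphism $L\to L'$ carries $H=[L,L]$ onto $H'$ and the centralizer $C_L(H)$ onto $C_{L'}(H')$; in the principal case $\tilde H=H+C_L(H)$, so it maps $\tilde H$ onto $\tilde H'$ and restricts to some $\psi\in\Au(\tilde H)$ with $\psi(H)=H'$, while its effect on the transverse line $\R d_1$ is multiplication by a nonzero scalar $c$. Comparing the bracket relations then gives $D'=c\,\psi D\psi^{-1}$, so $D$ and $D'$ are proportionally similar. For the reverse direction, every admissible outer derivation $D$ of $\R\oplus H$ yields $L=\R d_1\ltimes_D(\R\oplus H)\in\lia$ with derived algebra $\I D+[\tilde H,\tilde H]\cong H$, and proportionally similar derivations visibly give isomorphic extensions; this supplies both surjectivity and injectivity on classes.

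The main difficulty I anticipate is not any individual computation but the faithful bookkeeping of the two equivalence relations against the non-uniqueness built into the reduction: one must check that absorbing the inner derivation and the accompanying choices of $d_2'$, of the complement $\R d_1\oplus\R d_2$, and of $h$ all wash out at the level of proportional-similarity classes. The clean tool in the principal case is the canonical pair $H=[L,L]$ and $\tilde H=H+C_L(H)$, which renders $\tilde H$, and hence $[D]$, invariant under isomorphisms. The delicate point is the degenerate configuration in which \emph{both} members of the pair are inner: there $H+C_L(H)$ ceases to be a proper subalgebra, so the ideal $\tilde H$ must be produced and its well-definedness argued directly. Showing that these boundary cases obey the same proportional-similarity relation, and that the conditions of Proposition~\ref{Pro4.1} translate without loss into single-derivation conditions on $D\in H^1(\R\oplus H,\R\oplus H)$, is the technical heart of the argument.
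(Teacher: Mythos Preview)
Your overall strategy coincides with the paper's: absorb the inner derivation to realise $L$ as $\R z\oplus_d K$ with $K=\R\oplus H$, then show that isomorphism of such extensions is exactly proportional similarity of the class of $d$ in $H^1(K,K)$. Your intrinsic description $\tilde H=H+C_L(H)$ is a pleasant variant of the paper's direct computation in Proposition~\ref{Pro5.4}, but both arguments rest on the same fact: that $D|_H=a_z$ is an \emph{outer} derivation of $H$. Without this, your formula gives $H+C_L(H)=L$, and the invariance argument collapses.

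The genuine gap is in how you propose to handle that ``degenerate configuration''. You frame it as a boundary case to be treated separately and made to ``obey the same proportional-similarity relation''. That plan cannot succeed as stated, because in that configuration $\tilde H$ is no longer intrinsically determined and there is nothing to compare. What the paper does instead (Proposition~\ref{Pro5.2}) is prove that this case \emph{never occurs} for $L\in\lia$: if $a_z=\ad_u$ were inner, then after replacing $z$ by $z-u$ one has $[z,H]=[y,H]=0$, forcing $H=L^1=H^1+\R[z,y]$; the Jacobi identity gives $[z,y]\in\Z(H)$, so $H$ is simultaneously in $\mathrm{Lie}(n,n-1)$ and decomposable as $H^1\oplus\R[z,y]$, contradicting Proposition~\ref{Pro3.2}. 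Once you insert this short argument, your characterisation $\tilde H=H+C_L(H)$ holds unconditionally and the rest of your outline goes through.

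One small correction in your forward direction: an isomorphism sends $d_1$ to $c\,d_1'$ only modulo $\tilde H'$, so the correct relation is $\psi D\psi^{-1}=cD'+\ad_v$ for some $v\in\tilde H'$, i.e.\ $\bar D\sim_p\bar D'$ in $H^1(K,K)$ rather than the bare equality $D'=c\,\psi D\psi^{-1}$ you wrote. This is harmless for the conclusion but should be stated precisely.
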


%In practice, the classes of outer derivations of the first cohomology space of a given nilpotent Lie algebra $K$ are effectively represented by square matrices whenever a basis for $K$ is fixed. 
In practice, the first cohomology space $H^1(K,K)$ of an $n$-dimensional Lie algebra $K$ can be viewed as a subspace of $\M_n(\R)$, and it can be determined effectively whenever a basis for $K$ is fixed.
We can classify elements in $H^1(K,K)$ by simply using Jordan canonical forms (JCFs).
Therefore, Theorems \ref{Thm1} and \ref{Thm3} allow us to classify 
{\Lnone} and {\Lntwoa} whenever a classification of $n$-dimensional real nilpotent Lie algebras is given. 
Since the classification of real nilpotent Lie algebras was obtained up to dimension 7 (see Gong \cite{Gon98}),  
these theorems can be applied to achieve full classifications of $\mathrm{Lie}(n+1,n)$ and $\mathrm{Lie}_{\ad}(n+2,n)$ with $n \leq 7$.
It is noted that full classifications for {\Lnone} with $n \leq 5$ can be read from \cite{SW14}, while those with $n = 6,7$ are not reported so far.
In addition, we illustrate Theorems \ref{Thm1} and \ref{Thm3} by re-classifying {\Lnone} and {\Lntwoa} in cases $n=3,4$.
The obtained classifications are more compact than those in \cite{Mub63a, Mub63b} in the sense that unnecessary classes in \cite{Mub63a, Mub63b} are removed. 

The paper is organized as follows. 
%Section \ref{sec2} fixes notations and presents some results for later uses. 
Section~\ref{sec2} recalls necessary definitions and facts from Lie algebras.
In Section \ref{sec3}, we prove Theorems \ref{Thm1} and give two examples to illustrate the classification method derived from the theorem.
Afterwards, Theorem \ref{Thm2} is proved in Section \ref{sec4}. Finally, we prove Theorem \ref{Thm3} and classify {\Lntwoa} 
in low dimensions in Section \ref{sec5}.

%%%Section 2
\section{Preliminaries}\label{sec2}

Throughout this paper, the underlying field is always the field $\R$ of real numbers 
and $n \geq 2$ is a positive integer number. We use the following notations:
\begin{itemize}
	\item $\s \{x_1, \ldots, x_n\}$ is the vector space with basis $\{x_1, \ldots, x_n\}$.
	\item For two vector subspaces $U$ and $V$, we denote their sum and direct sum
	by $U+V$ and $U \dotplus V$, respectively.
	\item For a Lie algebra $L$, the notations $L^1 \co [L, L]$ and $L^2 \co \left[{L^1, L^1}\right]$ 
	denote the first and second derived ideals in its derived series, respectively. 
	Note that $L^1$ is also called the \emph{derived algebra} of $L$. 
	Besides, $\Z(L)$, $\D (L)$, $\ad (L)$ and $\Au(L)$ indicate the center, the set of derivations, 
	inner derivations and automorphisms of $L$, respectively.
	\item $a_x \co {\ad_x|}_{L^1}$ is the restriction of the adjoint operator $\ad_x$ on $L^1$. 
	\item $\M_n(\F)$ is the set of $n$-square matrices with entries in a field $\F$ 
	and $\GL_n(\F)$ denotes the group of all invertible matrices in $\M_n(\F)$.
	\item $H \oplus K$ is the direct sum of the Lie algebras $H$ and $K$, whereas $\R z \oplus_d K$ 
	is the semi-direct sum of Lie algebras by a certain $d \in \D (K)$, i.e. $[z, x] = d(x)$ for all $x \in K$.
	\item $\h_3 = \s \left\lbrace x_1, x_2, x_3 \, \big| \, [x_2, x_3] = x_1 \right\rbrace$ is the 
   \emph{3-dimensional real Heisenberg Lie algebra}. 
\end{itemize}

%Definition 2.1
\begin{defn}\label{Def2.1}
Let $\F$ be a field.
	\begin{enumerate}
		\item[1.] 	Two matrices $A, B \in \M_n(\F)$ are \emph{proportionally similar}, denoted by $A \sim_p B$, 
		if there exist $c \in \F \setminus \{0\}$ and $C \in \GL_n(\F)$ such that $cA = C^{-1}BC$.
		
		\item[2.] 	Two endomorphisms $\varphi, \psi$ of a finite dimensional vector space $V$ over $\F$ are \emph{proportionally similar}, denoted by $\varphi \sim_p \psi$, if there exist $c \in \F \setminus \{0\}$ and an automorphism $\sigma$ of $V$ such that $c \varphi = {\sigma}^{-1}\psi \sigma$.  	
	\end{enumerate}
\end{defn}

%Remark 2.2
\begin{rem}\label{Remark2.2}
%	\begin{enumerate}
%		\item[1.] 
		Two endomorphisms of a finite dimensional vector space over $\F$ are proportionally similar if and only if their matrices with respect to a given basis are proportionally similar.	
%		\item[2.]  
The classification of $\M_n(\F)$ up to proportional similarity can be derived from the JCFs of square matrices.	
%	\end{enumerate}  
\end{rem}

%Definition 2.3
\begin{defn}[{\cite[Introduction]{FKPS18}}]
	A classification problem is called \emph{wild} if it contains the problem of classifying 
	pairs of $n \times n$ matrices up to similarity transformations 
	$(M,N) \mapsto S^{-1}(M,N)S \co \left(S^{-1}MS, S^{-1}NS\right)$ with nonsingular $S$.
\end{defn}

As mentioned in Section \ref{sec1}, this concept was introduced by Donovan and Freislich \cite{DF72,DF73}. 
Moreover, wild problems are considered as hopeless since it contains the problem of classifying 
an arbitrary system of linear mappings, that is, representations of an arbitrary quiver (see \cite{FKPS18}).
Now we recall the notion of \emph{weak similarity}.
	
%Definition 2.4
\begin{defn}\label{Definition2.4}
Let $\F$ be a field.
%	Now we recall the notion of \emph{weak similarity}.
		\begin{enumerate}
		\item\label{Def2.4-1} Let $A, B, A' , B' \in \M_n (\F)$. Two matrix pairs $(A, B)$ and $(A', B')$ are called \emph{weakly similar} if there exist $S \in \GL_n(\F)$	and $\begin{bmatrix} \alpha & \beta \\ \gamma & \delta \end{bmatrix} \in \GL_2 (\F)$ such that 
		$$(A', B') = S^{-1}(\alpha A + \beta B, \gamma A + \delta B)S.$$
		
		\item Two pairs of endomorphisms $({\varphi}_1, {\psi}_1)$, $({\varphi}_2, {\psi}_2)$ of a finite dimensional vector space $V$ over $\F$ are called \emph{weakly similar} if there exist an automorphism $\sigma$ of $V$ and $\begin{bmatrix} \alpha & \beta \\ \gamma & \delta \end{bmatrix} \in \GL_2 (\F)$ such that 
		$$({\varphi}_2, {\psi}_2) = {\sigma}^{-1}(\alpha {\varphi}_1 + \beta {\psi}_1, \gamma {\varphi}_1 + \delta {\psi}_1)\sigma .$$
	\end{enumerate}
\end{defn}

%Remark 2.5
\begin{rem}\label{Remark2.5}
   The first item in Definition \ref{Definition2.4} was taken in \cite[Introduction]{FKPS18}. Two pairs of endomorphisms of a finite dimensional vector space over $\F$ are weakly similar if and only if two pairs of their matrices with respect to a given basis are weakly similar. In \cite[Theorem 1]{FKPS18}, the authors proved that the problem of classifying pairs of commuting matrices up to weak similarity is wild.
\end{rem}

In the rest of this section, we recall some well-known facts and present some preliminary results before entering to the main results.

%Proposition 2.6
\begin{prop}[{\cite[Lemma 2.1]{Gra05}}]\label{Lem2.1-deGraaf}
   Let $L$ be a real solvable Lie algebra. Then there is a subalgebra $K \subset L$ of codimension 1 
   and a derivation $d$ of $K$ such that $L = \R x \oplus_d K$. Moreover, if $L$ is not abelian, 
   then $d$ and $K$ can be chosen such that $d$ is an outer derivation of $K$. 
\end{prop}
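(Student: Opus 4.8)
The plan is to first construct the decomposition $L=\R x\oplus_d K$ for an arbitrary solvable $L$, and then, when $L$ is not abelian, to arrange that $d$ is outer by choosing the hyperplane $K$ carefully. Since $L$ is solvable, its derived algebra is proper, $L^1\subsetneq L$, so I can pick any codimension-$1$ subspace $K$ with $L^1\subseteq K$. Because $[L,K]\subseteq[L,L]=L^1\subseteq K$, such a $K$ is automatically an ideal, hence a subalgebra, and for any $x\in L\setminus K$ we have $L=\R x\oplus K$ as vector spaces. Setting $d\co\ad_x|_K$, the ideal property gives $d(K)\subseteq K$, and the Jacobi identity shows $d\in\D(K)$; thus $L=\R x\oplus_d K$, which settles the first assertion.

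For the second assertion, the key observation is that, once $K$ is fixed, changing the complementary vector $x$ only alters $d$ by an inner derivation: for $k\in K$ one has $\ad_{x+k}|_K=\ad_x|_K+\ad_k|_K$, and $\ad_k|_K$ is exactly the inner derivation of $K$ determined by $k$. Hence the class of $d$ in $\D(K)/\ad(K)$ depends only on $K$, and I claim that $d$ is inner if and only if $\Z(L)\not\subseteq K$. Indeed, $d$ is inner precisely when $\ad_x|_K=\ad_k|_K$ for some $k\in K$, i.e.\ when $z\co x-k$ satisfies $[z,K]=0$; since $z\notin K$ and $L=\R z\oplus K$ with $[z,z]=0$, this forces $[z,L]=0$, so $z\in\Z(L)\setminus K$. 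Conversely, any $z\in\Z(L)\setminus K$ yields $L=\R z\oplus K$ with $\ad_z|_K=0$, an inner derivation.

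Consequently it suffices to find a codimension-$1$ subspace $K$ containing both $L^1$ and $\Z(L)$; for such a $K$ the associated $d$ is forced to be outer. This is possible exactly when $L^1+\Z(L)\neq L$, so the whole argument reduces to establishing this inequality for a nonabelian solvable $L$, which I expect to be the main point. Suppose to the contrary that $L^1+\Z(L)=L$. Then, using that $\Z(L)$ brackets trivially,
\[
L^1=[L,L]=[L^1+\Z(L),\,L^1+\Z(L)]=[L^1,L^1]=L^2,
\]
which contradicts the strict decrease $L^2\subsetneq L^1$ of the derived series of a solvable Lie algebra with $L^1\neq 0$ (here $L^1\neq 0$ because $L$ is not abelian). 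Hence $L^1+\Z(L)$ is a proper subspace of $L$, I may enlarge it to a codimension-$1$ subspace $K\supseteq L^1+\Z(L)$, and the derivation $d=\ad_x|_K$ for $x\notin K$ is then outer, completing the proof. The main obstacle is thus conceptual rather than computational: recognizing that ``inner'' is governed by whether the center escapes $K$, and that solvability, through the strict descent of the derived series, is precisely what keeps $L^1+\Z(L)$ proper.
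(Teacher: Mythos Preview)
Your proof is correct. Note, however, that the paper does not supply its own proof of this proposition: it is quoted verbatim from de Graaf \cite[Lemma 2.1]{Gra05} and used as a black box, so there is no in-paper argument to compare against. Your argument is self-contained and clean; the identification of the obstruction ``$d$ inner $\Leftrightarrow \Z(L)\not\subseteq K$'' together with the one-line computation $L=L^1+\Z(L)\Rightarrow L^1=[L^1,L^1]=L^2$ is exactly the right mechanism, and nothing is missing.
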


According to Proposition \ref{Lem2.1-deGraaf}, each real solvable Lie algebra can be seen as 
an extension of a 1-codimensional subalgebra by a suitable derivation.

%Proposition 2.7
\begin{prop}[{\cite[Lemma 2.2]{Gra05}}]\label{Lem2.2-deGraaf}
   Let $K$ be a solvable Lie algebra and $d_1, d_2$ derivations of $K$. Set $L_i = \R x \oplus_{d_i} K$ for $i=1,2$. 
   Suppose that there exists $\sigma \in \Au \left(K\right)$ such that $\sigma d_1 \sigma^{-1} = \lambda d_2$ 
   for some scalar $\lambda \neq 0$. Then $L_1$ and $L_2$ are isomorphic.
\end{prop}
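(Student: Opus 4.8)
The plan is to construct an explicit linear isomorphism $\Phi \colon L_1 \to L_2$ and verify that it respects the Lie brackets. Every element of $L_i$ is written uniquely as $a x + k$ with $a \in \R$ and $k \in K$, and the hypothesis supplies both an automorphism $\sigma \in \Au(K)$ of the common $K$-part and a nonzero scalar $\lambda$ relating $d_1$ to $d_2$. The natural candidate therefore lets $\Phi$ act by $\sigma$ on $K$ while rescaling the distinguished generator $x$.

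First I would rewrite the hypothesis $\sigma d_1 \sigma^{-1} = \lambda d_2$ in the equivalent form $\sigma d_1 = \lambda d_2 \sigma$, which is the identity that will drive the verification. Then I would define $\Phi(a x + k) \co \lambda a x + \sigma(k)$ for all $a \in \R$ and $k \in K$. Since $\sigma$ is a vector-space automorphism of $K$ and $\lambda \neq 0$, the map $\Phi$ is a linear bijection, with inverse sending $a x + k$ to $\lambda^{-1} a x + \sigma^{-1}(k)$.

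It remains to check that $\Phi$ preserves brackets, which I would do on the three types of generating pairs. For $k_1, k_2 \in K$, the bracket $[k_1, k_2]_{L_1}$ is just the bracket in $K$, and since $\sigma$ is an automorphism of $K$ we obtain $\Phi([k_1, k_2]) = [\sigma(k_1), \sigma(k_2)] = [\Phi(k_1), \Phi(k_2)]_{L_2}$. For the mixed bracket $[x, k]_{L_1} = d_1(k)$, one side gives $\Phi([x, k]_{L_1}) = \sigma(d_1(k))$, while the other gives $[\Phi(x), \Phi(k)]_{L_2} = [\lambda x, \sigma(k)]_{L_2} = \lambda\, d_2(\sigma(k))$; these agree exactly by the rewritten hypothesis $\sigma d_1 = \lambda d_2 \sigma$. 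Finally $[x, x]$ vanishes on both sides. This establishes that $\Phi$ is a Lie algebra isomorphism, and hence $L_1 \cong L_2$.

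The argument has no genuine obstacle; the only point requiring care is the bookkeeping of the scalar, namely that $x$ must be rescaled by exactly $\lambda$ (and not $\lambda^{-1}$), so that the factor produced when $d_2$ acts on $\sigma(k)$ matches the factor relating $\sigma d_1$ to $d_2 \sigma$.
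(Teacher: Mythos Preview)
Your argument is correct: defining $\Phi(ax+k)=\lambda a x+\sigma(k)$ and checking the three types of brackets is exactly the right verification, and your bookkeeping with the scalar is accurate. Note, however, that the paper does not supply its own proof of this proposition; it is quoted directly from \cite[Lemma~2.2]{Gra05}, so there is no in-paper argument to compare against. Your construction is the standard one, and it is also in the same spirit as the explicit map the paper builds in the proof of Proposition~\ref{Pro2.8} immediately afterward.
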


The result in Proposition \ref{Lem2.2-deGraaf} is a sufficient condition, not a necessary one. 
Now, by Propositions \ref{Lem2.1-deGraaf} and \ref{Lem2.2-deGraaf}, we get some initial results as follows.

%Proposition 2.8
\begin{prop}\label{Pro2.8}
   Let $K$ be a real solvable Lie algebra and $d_1, d_2 \in \D(K)$. Set $L_i \co \R x \oplus_{d_i} K$ for $i=1,2$. 
   If $d_1 = d_2 + \ad_u$ for some $u \in K$, then $L_1$ and $L_2$ are isomorphic.
\end{prop}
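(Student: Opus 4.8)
The plan is to exhibit an explicit isomorphism rather than to invoke Proposition~\ref{Lem2.2-deGraaf}, because the hypothesis $d_1 = d_2 + \ad_u$ relates $d_1$ and $d_2$ by a ``translation'' by an inner derivation rather than by a conjugation of the form $\sigma d_1 \sigma^{-1} = \lambda d_2$; the latter hypothesis does not apply directly here. The starting observation is that $L_1$ and $L_2$ share the same underlying vector space $\R x \dotplus K$ and differ only in the action of $x$ on $K$, namely $[x,k]_{L_1} = d_1(k)$ versus $[x,k]_{L_2} = d_2(k)$ for $k \in K$, while the bracket restricted to $K$ is in both cases the original bracket of $K$.

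First I would define a linear map $\varphi \colon L_1 \to L_2$ by setting $\varphi|_K = \mathrm{id}_K$ and $\varphi(x) = x + u$. This map is clearly bijective: with respect to the decomposition $\R x \dotplus K$ it is block upper-triangular with identity blocks on the diagonal, hence an automorphism of the underlying vector space. It therefore suffices to check that $\varphi$ is a homomorphism of Lie algebras.

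Next I would verify that $\varphi$ preserves brackets. For $k, k' \in K$ this is immediate, since $\varphi$ fixes $K$ pointwise and the brackets of $L_1$ and $L_2$ agree on $K$. The essential case is the bracket $[x,k]$: on the one hand $\varphi\!\left([x,k]_{L_1}\right) = \varphi\!\left(d_1(k)\right) = d_1(k)$, because $d_1(k) \in K$; on the other hand $[\varphi(x), \varphi(k)]_{L_2} = [x + u, k]_{L_2} = d_2(k) + [u,k] = (d_2 + \ad_u)(k) = d_1(k)$, where $[u,k]$ is computed in $K$. The two sides coincide precisely because $d_1 = d_2 + \ad_u$, so $\varphi$ is a Lie algebra homomorphism and hence an isomorphism, giving $L_1 \cong L_2$.

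There is no genuine obstacle in this argument; the only points requiring care are the correct choice of the shift $\varphi(x) = x + u$ (with this sign) and the recognition that the term $[u,k]$ arising from expanding $[x+u, k]_{L_2}$ is exactly $\ad_u(k)$ evaluated inside $K$. Once the sign is fixed, the computation closes immediately.
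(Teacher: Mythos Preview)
Your proof is correct and follows essentially the same approach as the paper: both define the explicit map fixing $K$ pointwise and sending $x \mapsto x + u$, and then verify that this linear isomorphism preserves brackets precisely because $d_1 = d_2 + \ad_u$. Your write-up is in fact slightly more careful, since you explicitly note that the bracket on $K \times K$ is preserved (the paper tacitly assumes this).
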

	   
\begin{proof}
	We define a map $\ts \colon \R x \oplus_{d_1} K \to \R x \oplus_{d_2} K$ such that:
	\[
		\begin{cases}
			\ts(y) = y, & y \in K, \\
			\ts(x) = x + u.
		\end{cases}
	\]
	It is obvious that $\ts$ is a linear isomorphism. We show that it also preserves Lie brackets. 
	In fact, for arbitrary $y \in K$, we have
	\[
		\begin{array}{l l}
			& \ts \left([x, y]\right) = [\ts(x), \ts(y)] \\
			\Leftrightarrow & [x, y] = [\ts(x), \ts(y)] \\
			\Leftrightarrow & d_1 (y) = [x + u, y] \\
			\Leftrightarrow & d_1(y) = (d_2 + \ad_u)(y).
		\end{array}
	\]
	The last equation is obvious which completes the proof.
\end{proof}

By combining Propositions \ref{Lem2.2-deGraaf} and \ref{Pro2.8}, we have the following corollary.

%Corollary 2.9
\begin{cor}\label{Cor2.9}
	Let $K$ be a real solvable Lie algebra and $d_1$, $d_2 \in \D(K)$. Set $L_i \co \R x \oplus_{d_i} K$ for $i=1,2$. 
	If there is $\sigma \in \Au(K)$ such that $\sigma d_1 \sigma^{-1} = \alpha d_2 + \ad_u$ for some 
	$\alpha \in \R \setminus \{0\}$ and $u \in K$, then $L_1$ and $L_2$ are isomorphic. 
%	In other words, 
%	if $\bd_1 \sim_p \bd_2$, where $\bd_1$, $\bd_2$ are respectively equivalent classes of 
%	$d_1$ and $d_2$ in $\D(K)/\ad(K)$, then $L_1$ and $L_2$ are isomorphic.
\end{cor}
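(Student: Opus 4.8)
The plan is to exhibit a single Lie algebra to which both $L_1$ and $L_2$ are isomorphic, and to reach it by applying Propositions~\ref{Lem2.2-deGraaf} and~\ref{Pro2.8} in turn. The hypothesis $\sigma d_1 \sigma^{-1} = \alpha d_2 + \ad_u$ combines two distinct effects: a scaling by $\alpha$, which Proposition~\ref{Lem2.2-deGraaf} absorbs through proportional conjugation, and an inner perturbation by $\ad_u$, which Proposition~\ref{Pro2.8} absorbs through an isomorphism fixing $K$. The key is to choose an intermediate derivation that splits these two effects so that each proposition applies verbatim.

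First I would set $e \co d_2 + \alpha^{-1}\ad_u$. Since $d_2 \in \D(K)$ and $\alpha^{-1}\ad_u = \ad_{\alpha^{-1}u}$ is an inner derivation of $K$, we have $e \in \D(K)$, and we may form $L \co \R x \oplus_e K$. Scaling by $\alpha$ gives
\[
	\alpha e = \alpha d_2 + \ad_u = \sigma d_1 \sigma^{-1}.
\]
Reading this identity as $\sigma d_1 \sigma^{-1} = \alpha e$ with $\alpha \neq 0$, Proposition~\ref{Lem2.2-deGraaf}, applied to the pair $d_1$, $e$ with automorphism $\sigma$ and scalar $\lambda = \alpha$, yields $L_1 \cong L$. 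On the other hand, because $e = d_2 + \ad_{\alpha^{-1}u}$ differs from $d_2$ by the inner derivation $\ad_{\alpha^{-1}u}$, Proposition~\ref{Pro2.8}, applied to the pair $e$, $d_2$, yields $L \cong L_2$. Composing the two isomorphisms gives $L_1 \cong L_2$, as claimed.

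I do not expect a genuine obstacle here, since the statement is a formal consequence of the two preceding results and the argument is essentially bookkeeping. The only point demanding a little care is the treatment of the nonzero scalar $\alpha$: one must package the factor $\alpha^{-1}$ into the intermediate derivation $e$ precisely so that the leftover inner part is exactly $\ad_{\alpha^{-1}u}$, which is again inner and hence falls within the scope of Proposition~\ref{Pro2.8}.
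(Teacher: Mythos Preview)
Your argument is correct and matches the paper's approach: the paper simply states that the corollary follows by combining Propositions~\ref{Lem2.2-deGraaf} and~\ref{Pro2.8}, and your introduction of the intermediate derivation $e = d_2 + \ad_{\alpha^{-1}u}$ is precisely the natural way to carry out that combination.
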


%Corollary 2.10
\begin{cor}\label{Cor2.10}
	Let $K$ be a real solvable Lie algebra and $d \in \D(K)$. Set $L \co \R x \oplus_d K$. 
	If $d \in \ad(K)$ then $L$ is decomposable. In addition, we can decompose $L$ 
	into the form $L = \R x' \oplus K$ for some $x' \in L \setminus K$.
\end{cor}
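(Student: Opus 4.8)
The plan is to exhibit an explicit generator $x'$ of the complementary line that commutes with all of $K$, so that $L$ splits as a direct sum of ideals. Since $d \in \ad(K)$, I would first write $d = \ad_u$ for some $u \in K$, and then set $x' \co x - u$. Because $x \notin K$ while $u \in K$, the element $x'$ lies in $L \setminus K$; moreover $L = \s\{x'\} \dotplus K$ as vector spaces, since $x'$ differs from $x$ only by an element of $K$ (so $\s\{x'\} \cap K = 0$ and $\s\{x'\} + K = L$).

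The key step is a one-line bracket computation: for every $y \in K$,
\[
[x', y] = [x, y] - [u, y] = d(y) - \ad_u(y) = 0,
\]
using the defining relation $[x, y] = d(y)$ of the semidirect sum $\R x \oplus_d K$ together with the hypothesis $d = \ad_u$. Combined with $[x', x'] = 0$, this shows $x' \in \Z(L)$, so $\R x'$ is a (central, hence abelian) ideal of $L$.

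To assemble the conclusion, I would note that $K$ is already an ideal of $L = \R x \oplus_d K$, and that $\R x'$ is an ideal satisfying $\R x' \cap K = 0$ and $\R x' \dotplus K = L$; therefore the sum is a direct sum of ideals and $L = \R x' \oplus K$ as Lie algebras, so $L$ is decomposable. Equivalently, this is the special case $d_1 = d$, $d_2 = 0$ of Proposition~\ref{Pro2.8}, which yields $L \cong \R x \oplus_0 K = \R x \oplus K$; the element $x' = x - u$ is exactly the image produced by the isomorphism $\ts$ constructed there.

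I do not anticipate a genuine obstacle: the whole content is the verification that $x' = x - u$ centralizes $K$. The only point deserving care is confirming that $\R x'$ and $K$ are complementary \emph{ideals}, not merely complementary subspaces, so that the resulting splitting is a Lie-algebra direct sum rather than just a vector-space decomposition.
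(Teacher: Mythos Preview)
Your proposal is correct and follows exactly the same approach as the paper: set $x' = x - u$ where $d = \ad_u$, verify $[x', y] = d(y) - \ad_u(y) = 0$ for all $y \in K$, and conclude $L = \R x' \oplus K$. Your write-up is in fact more careful than the paper's, explicitly checking that both $\R x'$ and $K$ are ideals so that the splitting is a Lie-algebra direct sum, and observing the link to Proposition~\ref{Pro2.8}.
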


\begin{proof}
	If $d = \ad_u$ for some $u \in K$ then we change $x' = x - u$ to get $L = \R x' \dotplus K$. 
	Besides, we have $[x', z] = [x - u, z] = d(z) - \ad_u (z) = 0$ for all $z \in K$.
	Therefore, $L = \R x' \oplus K$ with $x' \in L \setminus K$.
\end{proof}

%%%Section 3
\section{The problem of classifying \Lnone}\label{sec3}

%This is the first main result of the paper in which we prove Theorem \ref{Thm1}. 
We present a proof for Theorem~\ref{Thm1} in this section.
Recall that {\Lnone} is the class of all $(n+1)$-dimensional real solvable Lie algebras 
having $n$-dimensional derived algebras.

%Subsection 3.1
\subsection{Description of \Lnone}\label{subsec3.1}

Since the derived algebra of a solvable Lie algebra is nilpotent (see \cite{Jac62}),
we can classify {\Lnone} by applying the following steps:
%In order to classify {\Lnone}, we proceed as follows:
\begin{enumerate}[Step 1.]
	\item Take an $n$-dimensional real nilpotent Lie algebra $K$;
	\item\label{step2-Lie(n+1,n)} Extend $K$ to all $(n+1)$-dimensional real solvable 
	Lie algebras $L$ which admits $K$ as derived algebra, i.e. $L^1 = K$. 
	Afterward, classify such Lie algebras;
	\item\label{step3-Lie(n+1,n)} Repeat two steps above for all possibilities of $K$.
\end{enumerate}

First of all, we fix an arbitrary $n$-dimensional real nilpotent Lie algebra $K$. Assume that $L \in \Li$ with $L^1 = K$. 
Without loss of generality, we can choose a basis $\{x_1, \ldots, x_n, y\}$ of $L$ in which $L^1 = K = \s \{x_1, \ldots, x_n \}$. 
By Proposition \ref{Lem2.1-deGraaf}, there exists $d \in \D(K)$ satisfying
\[
	L = \R y \oplus_d K.
\]
It also notes that $d(u) = [y, u]$ for $u \in K$, i.e. $d = a_y$.

In general, a Lie algebra of the form $L = \R y \oplus_d K$ for some $d \in \D(K)$ does not necessarily belong to \Lnone. 
Therefore, we first point out a necessary and sufficient condition of $d$ such that $L$ belongs to \Lnone.

%Proposition 3.1   
\begin{prop}\label{Pro3.1}
	Let $K$ be an $n$-dimensional real nilpotent Lie algebra, and $L = \R y \oplus_d K$ for $d \in \D (K)$ as above. 
	By renumbering, if necessary, we can assume that $K^1 = \s \{x_1, \ldots, x_m\}$ for $0 \leq m < n$. 
	Then the following conditions are equivalent:
	\begin{enumerate}
		\item $L \in \Li$;
		\item $L^1 = K$;
		\item $\s \{x_{m+1}, \ldots, x_n\} \subset d(K) + K^1$;
		\item\label{Cond3-Pro3.1} $\ra \left(d_{ij}\right)_{i > m} = n - m$, where $\left(d_{ij}\right)_n \in \M_n (\R)$ 
		is the matrix of $d$ with respect to the basis $\{x_1, \ldots, x_n\}$;
		\item $\td \colon K/K^1 \to K/K^1$ is a linear isomorphism, where $\td$ is induced from $d$ on the quotient Lie algebra $K/K^1$.
	\end{enumerate}
\end{prop}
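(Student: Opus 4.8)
The plan is to isolate the single structural fact that drives the whole statement --- an explicit description of $L^1$ --- and then read off each of the five conditions as a reformulation of ``$\td$ is surjective''.

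First I would compute the derived algebra of $L$. Writing $L = \R y \dotplus K$ as a vector space with $[y,u] = d(u)$ and the bracket on $K$ unchanged, the space $[L,L]$ is spanned by the brackets $[u,v]$ with $u,v \in K$, which span $K^1$, together with the elements $[y,u] = d(u)$, which span $d(K)$. Hence $L^1 = K^1 + d(K)$; in particular $L^1 \subseteq K$, so $\dim L^1 \leq n$. I will also record that every derivation preserves the derived algebra, since $d([a,b]) = [d(a),b] + [a,d(b)] \in K^1$; thus $d(K^1) \subseteq K^1$, and $d$ descends to a well-defined endomorphism $\td$ of $K/K^1$ given by $\td(u + K^1) = d(u) + K^1$.

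With these two facts the equivalences become short. For $(1) \Leftrightarrow (2)$: since $L$ is a one-dimensional extension of the nilpotent algebra $K$, it is solvable of dimension $n+1$, so membership in \Lnone\ means exactly $\dim L^1 = n$, and because $L^1 \subseteq K$ with $\dim K = n$ this is the same as $L^1 = K$. For $(2) \Leftrightarrow (3)$: using $L^1 = K^1 + d(K)$ together with the decomposition $K = K^1 \dotplus \s\{x_{m+1}, \ldots, x_n\}$, and noting $K^1 \subseteq d(K) + K^1$ automatically, the equality $d(K)+K^1 = K$ holds iff the complementary generators $x_{m+1}, \ldots, x_n$ all lie in $d(K) + K^1$. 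For $(3) \Leftrightarrow (5)$: projecting to $K/K^1$, condition $(3)$ says each basis vector $x_j + K^1$ (for $j > m$) lies in the image of $\td$, and since these form a basis of $K/K^1$ this is equivalent to $\td$ being surjective --- hence, being an endomorphism of a finite-dimensional space, an isomorphism.

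Finally, for $(4) \Leftrightarrow (5)$ I would exploit the block structure coming from $d(K^1) \subseteq K^1$: this forces $d_{ij} = 0$ whenever $i > m \geq j$, so the left block of the row-submatrix $(d_{ij})_{i>m}$ vanishes and its rank equals that of the lower-right $(n-m)\times(n-m)$ block, which is precisely the matrix of $\td$ in the basis $\{x_{m+1}+K^1, \ldots, x_n + K^1\}$. Thus $\ra (d_{ij})_{i>m} = n-m$ iff $\td$ has full rank, i.e.\ is an isomorphism. The main obstacle here is purely bookkeeping: one must track this block structure carefully enough to justify that the rank condition $(4)$ detects only the $\td$-block, and confirm that $\td$ is well defined. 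Once $L^1 = K^1 + d(K)$ and $d(K^1) \subseteq K^1$ are in hand, no further difficulty arises, and each remaining equivalence is a one-line linear-algebra restatement of surjectivity of $\td$.
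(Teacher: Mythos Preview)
Your proposal is correct and follows essentially the same approach as the paper: both start from the identity $L^1 = d(K) + K^1 \subseteq K$, use $d$-invariance of $K^1$ (via the Leibniz rule) to make $\td$ well defined, and then reduce each condition to surjectivity of $\td$. Your write-up is slightly more explicit about the block structure behind $(4)\Leftrightarrow(5)$, but the underlying argument is identical.
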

   
\begin{proof}
	It is obvious that
	\[
		L^1 = \s \{[y, u] \, \big| \, u \in K\} + \s \{[v, u] \, \big| \, v, u \in K\} = d(K) + K^1 \subset K.
	\]
	Therefore, $L \in \Li$ if and only if
	\[
		\begin{array}{l l}
			& L^1 = K \\
			\Leftrightarrow & d(K) + K^1 = K \\
			\Leftrightarrow & d(K) + K^1 = \s \{x_{m+1}, \ldots, x_n\} + K^1 \\
			\Leftrightarrow & \s \{x_{m+1}, \ldots, x_n\} \subset d(K) + K^1 \quad \left(\text{since $\s \{x_1, \ldots, x_m\} = K^1$}\right) \\
			\Leftrightarrow & \ra \left(d_{ij}\right)_{i > m} = n - m.
			%\Leftrightarrow & \text{$\td \colon K/K^1 \to K/K^1$ is a linear isomorphism}
		\end{array}
	\] 
	Due to the Leibniz formula, the derived algebra $K^1$ is $d$-invariant, i.e. $d\left(K^1\right) \subset K^1$. 
	Hence, $\td \colon K/K^1 \to K/K^1$ is well-defined and the last equation above means that $\td$ is a linear isomorphism. 
	This completes the proof.
\end{proof}

%Proposition 3.2
\begin{prop}\label{Pro3.2}
	Let $K$ be an $n$-dimensional real nilpotent Lie algebra. Then we have the following assertions:
	\begin{enumerate}
		\item\label{part1-Pro3.2} All Lie algebras in {\Lnone} are indecomposable.
		\item No $L \in \Li$ is an extension of $K$ by an inner derivation $d$ of $K$. 
		In other words, if $d \in \ad(K)$ then $L = \R y \oplus_d K \notin \Li$.
	\end{enumerate}
\end{prop}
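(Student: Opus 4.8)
The plan is to establish the two assertions in order and to deduce the second from the first together with Corollary~\ref{Cor2.10}. For assertion~\ref{part1-Pro3.2} I would argue by contradiction, exploiting the fact that the derived algebra of any nonzero solvable Lie algebra is a \emph{proper} subalgebra, so that peeling off a direct summand costs at least one unit of codimension; since members of $\Li$ have derived algebra of codimension exactly $1$, no nontrivial splitting can occur. For the second assertion the decisive point is that an inner derivation maps all of $K$ into $K^1$, so it induces the zero map on $K/K^1$; this simultaneously violates the isomorphism criterion of Proposition~\ref{Pro3.1} and, via Corollary~\ref{Cor2.10}, forces $L$ to decompose, contradicting assertion~\ref{part1-Pro3.2}.

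Carrying out assertion~\ref{part1-Pro3.2}: suppose some $L \in \Li$ decomposed as a direct sum $L = A \oplus B$ of nonzero ideals with $[A,B]=0$. Then $L^1 = A^1 \dotplus B^1$, and comparing dimensions with $\dim L = n+1$ and $\dim L^1 = n$ gives
\[
	(\dim A - \dim A^1) + (\dim B - \dim B^1) = 1 .
\]
Because $L$ is solvable, both $A$ and $B$ are nonzero solvable Lie algebras, hence each satisfies $A^1 \subsetneq A$ and $B^1 \subsetneq B$; thus each parenthesized term is at least $1$ and the left-hand side is at least $2$, a contradiction. Therefore every $L \in \Li$ is indecomposable.

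For the second assertion, suppose $d \in \ad(K)$, say $d = \ad_u$ for some $u \in K$. By Corollary~\ref{Cor2.10} the extension $L = \R y \oplus_d K$ splits as $L = \R x' \oplus K$ for a suitable $x' \in L \setminus K$, so $L$ is decomposable; by assertion~\ref{part1-Pro3.2} it cannot then lie in $\Li$. Alternatively, one sees this directly from Proposition~\ref{Pro3.1}: since $\ad_u(K) = [u,K] \subseteq K^1$, the induced endomorphism $\td$ of $K/K^1$ is zero, and as $K$ is nilpotent and nonzero we have $K/K^1 \neq 0$, so $\td$ fails to be a linear isomorphism and the last condition of Proposition~\ref{Pro3.1} does not hold.

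The one step I would treat with care, and which carries the whole argument for assertion~\ref{part1-Pro3.2}, is the strict inclusion $A^1 \subsetneq A$ for the summands: this uses solvability essentially and would be false for general (e.g. semisimple) Lie algebras, but it is exactly the hypothesis available here since $\Li$ consists of solvable algebras. Everything else reduces to a dimension count and a citation of the previously proved Corollary~\ref{Cor2.10}.
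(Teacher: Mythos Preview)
Your proof is correct and follows essentially the same approach as the paper: for part~\ref{part1-Pro3.2} you use the identical dimension count (each solvable summand loses at least one dimension on passing to its derived algebra, so a nontrivial splitting forces $\dim L^1 \leq \dim L - 2$), and for part~2 you invoke Corollary~\ref{Cor2.10} together with part~\ref{part1-Pro3.2} exactly as the paper does. Your additional alternative argument for part~2 via the induced map $\td$ on $K/K^1$ is a pleasant bonus that the paper does not include, but it is not needed.
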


\begin{proof}
	\begin{enumerate}
		\item Let $L \in \Li$. If $L$ is decomposable then
		\[
			L = L_1 \oplus L_2
		\]
		which implies $L^1 = L_1^1 \oplus L_2^1$ and $\dim L^1 = \dim L_1^1 + \dim L_2^1$. 
		Because $L_1$ and $L_2$ are solvable, we have
		\[
			\begin{cases}
				\dim L_1^1 \leq \dim L_1 - 1 \\ \dim L_2^1 \leq \dim L_2 - 1
			\end{cases}
		\]
		and thus $\dim L^1 \leq \dim L_1 + \dim L_2 - 2 = \dim L - 2$. This contradicts the assumption $L \in \Li$. 
		Therefore, $L$ is indecomposable.
		
		\item A direct consequence of Corollary \ref{Cor2.10} and part \ref{part1-Pro3.2} above.
	\end{enumerate}
	\noindent The proof of Proposition \ref{Pro3.2} is complete.
\end{proof}

%Subsection 3.2
\subsection{Proof of Theorem \ref{Thm1}}

As mentioned in Subsection \ref{subsec3.1}, the classification of {\Lnone} consists of 
three steps initializing by an $n$-dimensional real nilpotent Lie algebra $K$. 
Results in this subsection perform Step \ref{step2-Lie(n+1,n)}, that is, to extend $K$ to all 
$L \in \Li$ with $L^1 = K$ and then classify such Lie algebras.
 
According to Proposition \ref{Pro3.1}, the problem of classifying $L \in \Li$ with $L^1 = K$ 
is reduced to find out the conditions of $d_1$ and $d_2$ satisfying Proposition \ref{Pro3.1} 
such that two Lie algebras $\R y \oplus_{d_1} K$ and $\R y \oplus_{d_2} K$ are isomorphic. 
Namely, we have the following result.
	
%Proposition 3.3
\begin{prop}\label{Pro3.3}
	%Let $K$ be an $n$-dimensional real nilpotent Lie algebra.
	Let $L_1 = \R y \oplus_{d_1} K$ and $L_2 = \R y \oplus_{d_2} K$ be extensions of $K$ 
	by outer derivations $d_1$ and $d_2$ which satisfy Proposition \ref{Pro3.1}, respectively. 
	Then $L_1$ and $L_2$ are isomorphic if and only if there exist $\alpha \neq 0$ and $\sigma \in \Au(K)$ 
	such that $\sigma d_1 \sigma^{-1} = \alpha d_2 + \ad_u$ for some $u \in K$.
\end{prop}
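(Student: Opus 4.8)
The statement asserts an iff characterization of when two extensions $L_1 = \R y \oplus_{d_1} K$ and $L_2 = \R y \oplus_{d_2} K$ are isomorphic, in terms of a proportional-similarity-plus-inner-derivation condition on $d_1, d_2$. The backward direction is essentially free: it is exactly Corollary~\ref{Cor2.9}, which already combines Propositions~\ref{Lem2.2-deGraaf} and~\ref{Pro2.8} to produce an isomorphism from the hypothesis $\sigma d_1 \sigma^{-1} = \alpha d_2 + \ad_u$. So the whole content of the proof lies in the forward direction.

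\medskip
\noindent\textbf{Plan for the forward direction.} Suppose $f \colon L_1 \to L_2$ is a Lie algebra isomorphism. The first step is to observe that an isomorphism must carry the derived algebra to the derived algebra, i.e.\ $f(L_1^1) = L_2^1$. Since both $d_1$ and $d_2$ satisfy Proposition~\ref{Pro3.1}, we have $L_1^1 = K = L_2^1$, so $f$ restricts to a linear automorphism $\sigma \co f|_K$ of $K$; because $f$ preserves brackets and $K$ is a subalgebra, $\sigma \in \Au(K)$. This is the key structural reduction: the isomorphism of the \emph{extensions} forces an automorphism of the \emph{base} $K$.

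\medskip
\noindent\textbf{Tracking the transverse vector.} Next I would examine where $f$ sends the distinguished element $y \in L_1$. Since $f(y) \in L_2 = \R y \dotplus K$, write $f(y) = \alpha y + u$ for some $\alpha \in \R$ and $u \in K$. I claim $\alpha \neq 0$: otherwise $f(y) = u \in K = L_2^1$, and then $f(L_1) \subseteq K$ would be at most $n$-dimensional, contradicting that $f$ is onto the $(n+1)$-dimensional $L_2$ (equivalently, $f(\R y \dotplus K) = \R f(y) \dotplus K$ requires $f(y) \notin K$). Now I would compute the bracket $f([y,w])$ for arbitrary $w \in K$ in two ways. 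On one side $[y,w] = d_1(w)$, so $f([y,w]) = \sigma(d_1(w))$. On the other side $[f(y), f(w)] = [\alpha y + u,\ \sigma(w)] = \alpha\, d_2(\sigma(w)) + \ad_u(\sigma(w))$. Equating and using that $\sigma$ is invertible, this gives $\sigma d_1 = \alpha\, d_2 \sigma + \ad_u \sigma$, i.e.\ $\sigma d_1 \sigma^{-1} = \alpha d_2 + (\ad_u)|_K$ after right-multiplying by $\sigma^{-1}$ and noting $\ad_u \sigma \sigma^{-1} = \ad_u$. Since $u \in K$, the term $\ad_u$ is an inner derivation of $K$, and we obtain precisely the desired relation with $\alpha \neq 0$.

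\medskip
\noindent\textbf{Anticipated obstacle.} The routine calculations are genuinely routine; the one point requiring care is the bookkeeping with $\ad_u$ versus $(\ad_u)|_K$ and making sure the conjugation by $\sigma$ lands correctly. Concretely, one must verify that $\sigma^{-1} \ad_{u} \sigma = \ad_{\sigma^{-1}(u)}$ holds as operators on $K$ (a standard identity for automorphisms), so that the inner-derivation term stays inner after the conjugation is normalized to the form stated in the proposition. Depending on whether the relation is written as $\sigma d_1 \sigma^{-1} = \alpha d_2 + \ad_u$ or with the inner term absorbed on the other side, one may need to replace $u$ by $\sigma^{-1}(u)$; this is a cosmetic adjustment but should be stated explicitly so the final equation matches the proposition verbatim. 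I expect no deeper difficulty, since the essential leverage — that isomorphisms respect derived algebras and hence induce automorphisms of $K$ — is forced by the Proposition~\ref{Pro3.1} hypothesis that both $L_i$ have $K$ as their exact derived algebra.
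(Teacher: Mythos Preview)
Your proposal is correct and follows essentially the same route as the paper: the backward direction is dispatched by Corollary~\ref{Cor2.9}, and the forward direction uses $L_1^1 = L_2^1 = K$ (from Proposition~\ref{Pro3.1}) to restrict the isomorphism to $\sigma \in \Au(K)$, writes $f(y) = \alpha y + u$ with $\alpha \neq 0$, and reads off $\sigma d_1 \sigma^{-1} = \alpha d_2 + \ad_u$ from the bracket identity. Your ``anticipated obstacle'' is in fact a non-issue here, since the computation $\sigma d_1 = (\alpha d_2 + \ad_u)\sigma$ already yields $\sigma d_1 \sigma^{-1} = \alpha d_2 + \ad_u$ directly, with no conjugation of $\ad_u$ required.
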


\begin{proof}
	\begin{enumerate}
		\item[$(\Leftarrow)$] It follows directly from Corollary \ref{Cor2.9}.
		
		\item[$(\Rightarrow)$] Assume that $L_1$ and $L_2$ are isomorphic by $\ts \colon \R y \oplus_{d_1} K \to \R y \oplus_{d_2} K$. 
		By Proposition \ref{Pro3.1}, $L_1^1=L_2^1=K$.
		Thus $\sigma \co \ts|_K \colon K \to K$ is also an isomorphism. 
		Set $\ts(y) = \alpha y + u$ with $\alpha \neq 0$ and $u \in K$. 
		Then
	\[
		\begin{array}{l l l}
			& \ts ([y, x]) = \left[ \ts(y), \ts(x)\right], &\forall x \in K\\
			\Leftrightarrow & \sigma \left([y, x]\right) = \left[ \alpha y + u, \sigma (x)\right], &\forall x \in K\\
			\Leftrightarrow & \sigma d_1(x) = \left(\alpha d_2 + \ad_u\right) \sigma (x), &\forall x \in K\\
			\Leftrightarrow & \sigma d_1 \sigma^{-1} = \alpha d_2 + \ad_u.
		\end{array}
	\]
	\end{enumerate}
	\noindent The proof of Proposition \ref{Pro3.3} is complete.
\end{proof}

\begin{pot1}
	As we have seen, for a given $n$-dimensional real nilpotent Lie algebra $K$, 
	all $L \in \Li$ with $L^1 = K$ are of the forms $\R y \oplus_d K$ in which 
	$d \in \D(K) \setminus \ad(K)$ satisfies one of the equivalent conditions in Proposition \ref{Pro3.1}. 
	Now, let us consider two extensions
	\[
		\begin{array}{l l l}
			L_i \co \R y \oplus_{d_i} K, & i = 1,2,
		\end{array}
	\]
	where $d_1, d_2 \in \D(K) \setminus \ad(K)$ satisfy equivalent conditions in Proposition \ref{Pro3.1}.
	By Proposition \ref{Pro3.3}, $L_1 \cong L_2$ if and only if there exist $\alpha \neq 0$ and 
	$\sigma \in \Au (K)$ such that $\sigma d_1 \sigma^{-1} = \alpha d_2 + \ad_u$ for some $u \in K$. 
	By setting $d = \frac{1}{\alpha} \sigma d_1 \sigma^{-1}$ we have:
	\begin{itemize}
		\item $d \sim_p d_1$;
		\item $d = d_2 + \ad_{\frac{1}{\alpha}u}$, i.e. $\bd = \bd_2$ in $\D(K)/ \ad(K)$, where $\bd$ 
		and $\bd_2$ are $\ad(K)$-modulo classes of $d, d_2 \in \D(K) \setminus \ad(K)$, respectively.
	\end{itemize}
	This implies $\bd_1 \sim_p \bd_2$ in $\D (K)/ \ad(K)$. For convenience, we also call the 
	$\ad (K)$-modulo class $\bd$ of $d \in \D(K) \setminus \ad(K)$ an \emph{outer derivation} of $K$.

	It is noted that $K$ can be viewed as a left $K$-module with the scalar product defined by the Lie bracket $x \cdot y:=[x,y]$ for each $(x,y) \in K \times K$.
	From this point of view, by using \cite[VII, Proposition~2.2]{Hilton1997}, we can identify $\D(K)/ \ad(K)$ with $\hk$ which is the first cohomology space of $K$ with coefficients in $K$.
	Therefore, the problem of classifying all $L \in \Li$ with $L^1 = K$ is equivalent to the problem of classifying all 
	equivalent classes $\bd \in \hk$ of outer derivations $d$ satisfying equivalent conditions in 
	Proposition \ref{Pro3.1} up to proportional similarity. The proof is complete.\hfill{$\square$}
\end{pot1}

%Remark 3.4
\begin{rem}\label{Rem3.4}
	\begin{enumerate}
		\item When $L^1 = K = \R^n$, the structure of $L = \R y \oplus_d \R^n$ is absolutely determined 
		by the derivation $d \in \D(\R^n)$. According to Propositions \ref{Pro3.1} and \ref{Pro3.3}, 
		the classification problem is now equivalent to the classification of $\GL_n (\R)$ up to proportional similarity
		which is reduced to classify JCFs in $\GL_n (\R)$ by Remark \ref{Remark2.2}. 
		
		\item More generally, the results in this section are also true if we start with ``$K$ is solvable'' instead of ``$K$ is nilpotent''.
	\end{enumerate}
\end{rem}

%Remark 3.5
\begin{rem}\label{Rem3.5}
	We also note that the problem of classifying {\Lnone} is essentially wild because Step \ref{step3-Lie(n+1,n)} 
	in Subsection \ref{subsec3.1} involves the problem of classifying $n$-dimensional real nilpotent Lie algebras. 
	However, we can achieve a full classification of {\Lnone} whenever we have a classification of $n$-dimensional real nilpotent Lie algebras.
\end{rem}

%Subsection 3.3
\subsection{Illustrative examples}

In this subsection, we will classify {\Lnone} in low dimensions by the technique pointed out above. 
More concretely, we give classification of {\Lnone} for $n \leq 4$. Because the case $n = 2$ is a 
simple result of classifying 3-dimensional Lie algebras (see, e.g. \cite[Theorem 10]{VTTTT19}), 
we will illustrate with $n = 3$ by Example \ref{Ex3.6} and $n = 4$ by Example \ref{Ex3.7}.

%Example 3.6
\begin{ex}[The classification of $\mathrm{Lie}(4,3)$]\label{Ex3.6}
	It is well-known that there are only two 3-dimensional real nilpotent Lie algebras: $\R^3$ and $\h_3$. 
  
	\begin{enumerate}[\bf A.]
		\item If $K = \R^3$, it returns to classify JCFs in $\GL_3 (\R)$ by Remark \ref{Rem3.4}. 
		More concretely, there are four families of Lie algebras in $\mathrm{Lie}(4,3)$ 
		with derived algebra $\R^3$ listed in \cite[Proposition 3.2]{VHTHT16}.
      
		\item For $K = \h_3 = \s \{x_1, x_2, x_3 \, \big| \, [x_2, x_3] = x_1\}$, we set
   		\[
   			\begin{array}{l l}
   				L_d \co \R x_4 \oplus_d K, & d \in \D(K).
   			\end{array}
   		\]
		Since $d([x_i, x_j]) = [d(x_i), x_j] + [x_i, d(x_j)]$ for $1 \leq i < j \leq 3$, 
		the matrix of $d$ with respect to the basis $\{x_1, x_2, x_3\}$ is of the form
		\[
			\begin{array}{l l}
				d = \begin{bmatrix} a+b & f & g \\ 0 & a & c \\ 0 & e & b \end{bmatrix}; & a, b, c, e, f, g \in \R.
			\end{array}
		\]
		Since
		\[
			\ad(K) = \s \left\lbrace \ad_{x_2} = \begin{bmatrix} 0 & 0 & 1 \\ 0 & 0 & 0 \\ 0 & 0 & 0 \end{bmatrix},
			\ad_{x_3} = \begin{bmatrix} 0 & -1 & 0 \\ 0 & 0 & 0 \\ 0 & 0 & 0 \end{bmatrix} \right\rbrace
		\]
		we have
		\[
			\hk = \left\lbrace \bd = \begin{bmatrix} a+b & 0 & 0 \\ 0 & a & c \\ 0 & e & b \end{bmatrix} 
			\Bigg| \, a, b, c, e \in \R \right\rbrace.
		\]
		Since $K^1 = \s \{x_1\}$, condition \ref{Cond3-Pro3.1} of Proposition \ref{Pro3.1} implies that
		\begin{equation}\label{Cond3.1}
			\begin{array}{l l l l l}
				L \in \mathrm{Lie}(4,3) & \Leftrightarrow & \ra \begin{bmatrix} a & c \\ e & b \end{bmatrix} = 2
				& \Leftrightarrow & \det \begin{bmatrix} a & c \\ e & b \end{bmatrix} \neq 0.
			\end{array}
		\end{equation}
		Therefore, the classification of $L_d$ is equivalent to classify $\bd \in \hk$ with condition \eqref{Cond3.1} 
		up to proportional similarity. The possible JCFs of $\bd$ are as follows (we omit zeros):
		\[
			\begin{array}{l}
				J_{1(\lambda_1, \lambda_2)} = \di (\lambda_1+\lambda_2, \lambda_1, \lambda_2) \; (|\lambda_1| \geq |\lambda_2| > 0), \\
				J_{2(\lambda)} = \begin{bmatrix} 2\lambda \\ & \lambda & 1 \\ & & \lambda \end{bmatrix} (\lambda \neq 0), \\
				J_{3(\lambda)} = \begin{bmatrix} 2\lambda \\ & \lambda & 1 \\ & -1 & \lambda \end{bmatrix} (\lambda \in \R).
			\end{array}
		\]
		It can check that
		\[
			\begin{array}{l}
				J_{1(\lambda_1, \lambda_2)} \sim_p \di (1+\lambda, 1, \lambda) \co A (\lambda) \, (0 < |\lambda| \leq 1), \\
				J_{2(\lambda)} \sim_p \begin{bmatrix} 2 \\ & 1 & 1 \\ & & 1 \end{bmatrix} \co B, \\
				J_{3(\lambda)} \sim_p \begin{bmatrix} 2|\lambda| \\ & |\lambda| & 1 \\ & -1 & |\lambda| \end{bmatrix} \co C(\lambda) \, (\lambda \geq 0),	
			\end{array}
		\]
		i.e. we have three equivalent classes of $\bd \in \hk$ up to proportional similarity. 
		Therefore, there exist three families of Lie algebras 
		in $\mathrm{Lie}(4,3)$ with derived algebra $K = \h_3$, namely, $L_{A(\lambda)}$, $L_B$ and $L_{C(\lambda)}$.
	\end{enumerate}
\end{ex}

%Example 3.7
\begin{ex}[The classification of $\mathrm{Lie}(5,4)$]\label{Ex3.7}
	If $L \in \mathrm{Lie}(5,4)$ then $L^1$ falls into three cases: 
	$\R^4$, $\R \oplus \h_3$ and $\g_4$ (see \cite[Proposition 1]{Dix58}). 		
	\begin{enumerate}[\bf A.]
		\item If $K = \R^4$ it is the classification of JCFs in $\GL_4 (\R)$. More concretely, 
		there are fourteen families of Lie algebras in $\mathrm{Lie}(5,4)$ with derived algebra 
		$\R^4$ listed in \cite[Proposition 3.4]{VHTHT16}.
    	
		\item Let $K = \R \oplus \h_3 = \s \left\lbrace x_1, x_2, x_3, x_4 \, \big| \, [x_2, x_3] = x_1\right\rbrace$. Set
		\[
			\begin{array}{l l}
				L = L_d \co \R x_5 \oplus_d K, & d \in \D(K).
			\end{array}
		\]
		By the same argument in Example~\ref{Ex3.6}, we have
		\[
			\hk = \left\lbrace \bd = \begin{bmatrix} a+b & 0 & 0 & k \\ 0 & a & e & 0 \\ 0 & f & b & 0 \\ 0 & g & h & c \end{bmatrix}
			\Bigg| \, a, b, c, e, f, g, h, k \in \R \right\rbrace.
		\]
		Since $K^1 = \s \{x_1\}$, condition \ref{Cond3-Pro3.1} of Proposition \ref{Pro3.1} implies that
		\begin{equation}\label{Cond3.2}
			\begin{array}{l l l l l}
				L \in \mathrm{Lie}(5,4) & \Leftrightarrow & \ra \begin{bmatrix} a & e & 0 \\ f & b & 0 \\ g & h & c \end{bmatrix} = 3
				& \Leftrightarrow & \begin{cases} c \neq 0 \\ \det \begin{bmatrix} a & e \\ f & b \end{bmatrix} \neq 0 \end{cases}\hspace{-10pt}
			\end{array}
		\end{equation}
		Therefore, the classification of $L_d$ is equivalent to classify $\bd \in \hk$ 
		with condition \eqref{Cond3.2} up to proportional similarity.
		
		Set $D \co \begin{bmatrix} a & e & 0 \\ f & b & 0 \\ g & h & c \end{bmatrix}$. 
		The possible JCFs of $D$ are as follows:
		\[
			\begin{array}{l l l}
				J_1 = \di (\lambda_1, \lambda_2, c) \, (\lambda_1\lambda_2c \neq 0) 
					\sim_p \di (1, \alpha, \beta) \, (\alpha\beta \neq 0), \\
				J_2 = \begin{bmatrix} \lambda \\ & c \\ & 1 & c \end{bmatrix} (\lambda c \neq 0) 
					\sim_p \begin{bmatrix} \alpha \\ & 1 \\  & 1 & 1 \end{bmatrix} (\alpha \neq 0), \\
				J_3 = \begin{bmatrix} \lambda \\ 1 & \lambda \\ & & c \end{bmatrix} (\lambda c \neq 0) 
					\sim_p \begin{bmatrix} 1 \\ 1 & 1 \\ & & \beta \end{bmatrix} (\beta \neq 0), \\
				J_4 = \begin{bmatrix} c \\ 1 & c \\ & 1 & c \end{bmatrix} (c \neq 0)
					\sim_p \begin{bmatrix} 1 & &  \\ 1 & 1 & \\  & 1 & 1 \end{bmatrix}, \\
				J_5 = \begin{bmatrix} \lambda & 1 \\ -1 & \lambda \\ & & c \end{bmatrix} (\lambda \in \R, c \neq 0).
			\end{array}
		\]
		We would like to emphasize two points here:
		\begin{itemize}
			\item The equivalent classes of $\bd$ derived from $J_2$ and $J_3$ are absolutely 
			different even though $J_2$ and $J_3$ coincide from Linear Algebra point of view. 
			The reason is that the pair $(x_2, x_3)$ enters into $K$ symmetrically, 
			whereas $x_4$ is apart from other basic vectors of $L^1$. In other words, 
			the location of Jordan blocks in the JCFs of $D$ is significant.
			
			\item In this case, we should not represent the JCFs of $D$ with entries 1 
			above the main diagonal. For instance, we cannot represent
			\[
				\begin{array}{l l l}
					J_2 = \begin{bmatrix} \lambda \\ & c & 1 \\ & & c \end{bmatrix} (\lambda c \neq 0)
					& \text{or} & J_4 = \begin{bmatrix} c & 1 \\ & c & 1 \\ & & c \end{bmatrix}
				\end{array}
			\]
			because $\bd \in \hk$ has two zeros in the last columns. In fact, if we represent $J_2$ and $J_4$ 
			as these types, the obtained results are not Lie algebras since they do not obey the Jacobi identity.
		\end{itemize}
		All above arguments show that we have five equivalent classes of $\bd \in \hk$ up to proportional similarity as follows:
		\[
			\begin{array}{l l}
				D_1 = \begin{bmatrix} 1+\alpha & & & k \\ & 1 \\ & & \alpha \\ & & & \beta \end{bmatrix} (\alpha\beta \neq 0), &
				D_2 = \begin{bmatrix} 1+\alpha & & & k \\ & \alpha \\ & & 1 \\ & & 1 & 1 \end{bmatrix} (\alpha \neq 0), \\
				D_3 = \begin{bmatrix} 2 & & & k \\ & 1 \\ & 1 & 1 \\ & & & \beta \end{bmatrix} (\beta \neq 0), &
				D_4 = \begin{bmatrix} 2 & & & k \\ & 1 \\ & 1 & 1 \\ & & 1 & 1 \end{bmatrix}, \\
				D_5 = \begin{bmatrix} 2 \lambda & & & k \\ & \lambda & 1 \\ & -1 & \lambda \\ & & & c \end{bmatrix} (\lambda \in \R, c \neq 0).
			\end{array}
		\]
		Finally, we need to refine parameters in each case.
		\begin{enumerate}
			\item For $\bd = D_1$. If $k = 0$ then we have $L_{A(\alpha, \beta)}$ where
			\[
				A(\alpha, \beta) \co \di \left(1 + \alpha, 1, \alpha, \beta\right), \quad \alpha\beta \neq 0.
			\]
			Since the transformation $T = \di \left(1, \begin{bmatrix} 0 & -1 \\ 1 & 0 \end{bmatrix}, \frac{1}{\alpha}, \alpha \right)$ 
			gives rise to the isomorphism $L_{A(\alpha, \beta)} \cong L_{A\left(\frac{1}{\alpha}, \frac{\beta}{\alpha}\right)}$, 
			we can reduce the parameters to $0 \neq |\alpha| \leq 1$ and $\beta \neq 0$. If $k \neq 0$, we eliminate $k$ by changing 
			$x'_4 = \frac{k}{\beta-1-\alpha}x_1 + x_4$ when $\beta \neq 1 + \alpha$, and normalize $k = 1$ 
			by scaling $x'_4 = \frac{1}{k}x_4$ when $\beta = 1+\alpha \neq 0$. Thus, we have $L_{B(\alpha)}$ where
			\[
				B(\alpha) \co \begin{bmatrix} 1 + \alpha & & & 1 \\ & 1 \\ & & \alpha \\ & & & 1+\alpha \end{bmatrix}, \quad \alpha \neq 0, -1.
			\]
			Similarly, we can also reduce the parameter to $\alpha \in (-1, 1] \setminus \{0\}$ as 
			$L_{B(\alpha)} \cong L_{B\left(\frac{1}{\alpha}\right)}$ by the isomorphism 
			$T = \di \left(1, \begin{bmatrix} 0 & -1 \\ 1 & 0 \end{bmatrix}, \frac{1}{\alpha}, \alpha \right)$.
         
			\item For $\bd = D_2$, we eliminate $k$ by changing $x'_4 = -\frac{k}{\alpha}x_1 + x_4$ to get $L_{C(\alpha)}$ where
			\[
				C(\alpha) \co \begin{bmatrix} 1+\alpha \\ & \alpha \\ & & 1 \\ & & 1 & 1 \end{bmatrix}, \quad \alpha \neq 0.
			\]
			
			\item For $\bd = D_3$. If $k = 0$ we have $L_{D(\beta)}$ where
			\[
				D(\beta) \co \begin{bmatrix} 2 \\ & 1 \\ & 1& 1 \\ & & & \beta \end{bmatrix}, \quad \beta \neq 0.
			\]
			If $k \neq 0$, we eliminate $k$ by changing $x'_4 = \frac{k}{\beta - 2}x_1 + x_4$ when $\beta \neq 2$, 
			and normalize $k = 1$ by scaling $x'_4 = \frac{1}{k}x_4$ when $\beta = 2$. Thus we have $L_E$ with
			\[
				E \co \begin{bmatrix} 2 & & & 1 \\ & 1 \\ & 1 & 1 \\ & & & 2 \end{bmatrix}.
			\]
			
			\item For $\bd = D_4$, we eliminate $k$ by changing $x'_4 = -kx_1 + x_4$ to get $L_F$ where
			\[
				F \co \begin{bmatrix} 2 \\ & 1 \\ & 1 & 1 \\ & & 1 & 1 \end{bmatrix}.
			\]
			
			\item For $\bd = D_5$. If $k = 0$ we have $L_{G(\lambda, c)}$ where
			\[
				G(\lambda, c) \co \begin{bmatrix} 2\lambda \\ & \lambda & 1 \\ & -1 & \lambda \\ & & & c \end{bmatrix}, \quad \lambda \in \R, c \neq 0.
			\]
			Since the transformation $T = \di (-1, -1, 1, 1, -1)$ creates the isomorphism $L_{G(\lambda, c)} \cong L_{G(-\lambda, -c)}$,
			we can reduce the parameters to $\lambda \geq 0$ and $c > 0$. If $k \neq 0$, we eliminate $k$ 
			by changing $x'_4 = \frac{k}{c - 2\lambda}x_1 + x_4$ when $c \neq 2\lambda$, and normalize $k = 1$ 
			by scaling $x'_4 = \frac{1}{k}x_4$ when $c = 2\lambda \neq 0$. Thus, we have $L_{H(\lambda)}$ where
			\[
				H(\lambda) \co \begin{bmatrix} 2\lambda & & & 1 \\ & \lambda & 1 \\ & -1 & \lambda \\ & & & 2\lambda \end{bmatrix}, 
				\quad \lambda \neq 0.
			\]
			Similarly, we can reduce the parameter to $\lambda > 0$ since $L_{H(\lambda)} \cong L_{H(-\lambda)}$ 
			by the isomorphism $T = \di (-1, -1, 1, 1, -1)$.
		\end{enumerate}
		To sum up, we have eight families of Lie algebras in $\mathrm{Lie}(5,4)$ with derived algebra 
		$K = \R \oplus \h_3$, namely, $L_{A(\alpha, \beta)}$, $L_{B(\alpha)}$, $L_{C(\alpha)}$, $L_{D(\beta)}$, 
		$L_E$, $L_F$, $L_{G(\lambda, c)}$ and $L_{H(\lambda)}$.
		
		\item Let $K = \g_4 = \s \{x_1, x_2, x_3, x_4 \, \big| \, [x_2, x_4] = x_1, [x_3, x_4] = x_2\}$. Set
		\[
			\begin{array}{l l}
				L = L_d \co \R x_5 \oplus_d K, & d \in \D(K).
			\end{array}
		\]
		In this case, we have
		\[
			\hk = \left\lbrace \bd = \begin{bmatrix} a+2b & 0 & e & 0 \\ 0 & a + b & 0 & 0 \\ 0 & 0 & a & c \\ 0 & 0 & 0 & b \end{bmatrix} 
			\Bigg| \, a, b, c,e \in \R \right\rbrace.
		\]
		Since $K^1 = \s\{x_1, x_2\}$, condition \ref{Cond3-Pro3.1} of Proposition \ref{Pro3.1} implies that 
		$L \in \mathrm{Lie}(5,4)$ if and only if $ab \neq 0$. So the classification of $L_d$ is equivalent to classify 
		$\bd \in \hk$ with $ab \neq 0$ up to proportional similarity. 
		The possible JCFs of $\begin{bmatrix} a & c \\ 0 & b \end{bmatrix}$ are as follows:
		\[
			\begin{array}{l l l}
				\begin{bmatrix} a \\ & b \end{bmatrix} (ab \neq 0) \sim_p \begin{bmatrix} \lambda \\ & 1 \end{bmatrix} (\lambda \neq 0)
				& \text{or} &
				\begin{bmatrix} a & 1 \\  & a \end{bmatrix} (a \neq 0) \sim_p \begin{bmatrix} 1 & 1 \\ & 1 \end{bmatrix}.
			\end{array}
		\]
		Similarly, we should not represent the JCFs in this case with entries 1 below the main diagonal. 
		Therefore, we have two equivalent classes of $\bd \in \hk$ up to proportional similarity as follows:
		\[
			\begin{array}{l l l}
				D_{1(\lambda)} = \begin{bmatrix} \lambda+2 & & e \\ & \lambda+1 \\ & & \lambda \\ & & & 1 \end{bmatrix} (\lambda \neq 0)
				& \text{or} & 
				D_2 = \begin{bmatrix} 3 & & e \\ & 2 \\ & & 1 & 1 \\ & & & 1 \end{bmatrix}.
			\end{array}
		\]
		We also eliminate $e$ in $D_{1(\lambda)}$ (resp. $D_2$) by changing $x'_3 = -\frac{e}{2}x_1 + x_3$ 
		(resp. $x'_3 = -\frac{e}{2}x_1 + x_3$ and $x'_4 = -\frac{1}{2}x_3 + x_4$) to obtain
		\[
			\begin{array}{l l l}
				I(\lambda) \co \di \left(\lambda+2, \lambda+1, \lambda, 1\right) \, (\lambda \neq 0)
				& \text{or} &
				J \co \begin{bmatrix} 3 \\ & 2 \\ & & 1 & 1 \\ & & & 1 \end{bmatrix}.
			\end{array}
		\]
		To summarize, there are two families of Lie algebras in $\mathrm{Lie}(5,4)$ with derived algebras 
		$K = \g_4$, namely, $L_{I(\lambda)}$ and $L_J$.
	\end{enumerate}
\end{ex}

%Remark 3.8
\begin{rem}
	In 1963, Mubarakzyanov \cite{Mub63a,Mub63b} classified real solvable Lie algebras of dimension 4 and 5. 
	We summarize the correspondence between our classification and Mubarakzyanov's ones in Table \ref{tab1}.
	%Table 1
	\begin{table}[!h]
		\centering
		\begin{tabular}{c c c l c c c c}
         \hline\noalign{\smallskip}
         \rotatebox[origin=c]{90}{Examples} & \rotatebox[origin=c]{90}{$\dim L$} & \rotatebox[origin=c]{90}{$[L, L]$} 
         & \rotatebox[origin=c]{90}{Types} & \rotatebox[origin=c]{90}{Notes} & \rotatebox[origin=c]{90}{\cite[\S 5]{Mub63a}} 
         & \rotatebox[origin=c]{90}{\cite[\S 10]{Mub63b}} & \rotatebox[origin=c]{90}{Notes} \\
         \noalign{\smallskip}\hline\noalign{\smallskip}
         \multirow{3}{*}{\ref{Ex3.6}} & \multirow{3}{*}{4} & \multirow{3}{*}{$\h_3$} & $L_{A(\lambda)}$ & $0 < |\lambda| \leq 1$ 
         & $g_{4,8}^h$ & & $0 \neq |h| \leq 1$ \\
         & & & $L_B$ & & $g_{4,7}$ & & \\
          & & & $L_{C(\lambda)}$ & $\lambda \geq 0$ & $g_{4,9}^p$ & & $p \neq 0$ \\
         \noalign{\smallskip}\hline\noalign{\smallskip}
         \multirow{10}{*}{\ref{Ex3.7}} &\multirow{10}{*}{5} & \multirow{8}{*}{$\R \oplus \h_3$} & $L_{A(\alpha, \beta)}$ 
         & $0 < |\alpha| \leq 1$, $\beta \neq 0$ & & $g_{5,19}^{\alpha\beta}$ & $\alpha\beta \neq 0$ \\
            & & & $L_{B(\alpha)}$ & $\alpha \in \left(-1, 1\right] \setminus \{0\}$ & & $g_{5,20}^\alpha$ & $\alpha \neq 0, -1$ \\
            & & & $L_{C(\alpha)}$ & $\alpha \neq 0$ & & $g_{5,28}^\alpha$ & $\alpha \neq 0$ \\
            & & & $L_{D(\beta)}$ & $\beta \neq 0$ & & $g_{5,23}^\beta$ & $\beta \neq 0$ \\
            & & & $L_E$ & & & $g_{5,24}^\epsilon$ & $\epsilon = \pm 1$ \\
            & & & $L_F$ & & & $g_{5,21}$ &  \\
            & & & $L_{G(\lambda, c)}$ & $\lambda \geq 0, c > 0$ & & $g_{5,25}^{p\beta}$ & $\beta \neq 0$ \\
            & & & $L_{H(\lambda)}$ & $\lambda > 0$ & & $g_{5,26}^{\epsilon p}$ & $\epsilon = \pm 1$ \\
         \noalign{\smallskip}\cline{3-8}\noalign{\smallskip}
            & & \multirow{2}{*}{$\g_4$} & $L_{I(\lambda)}$ & $\lambda \neq 0$ & & $g_{5,30}^h$ & $h \neq 0$ \\
            & & & $L_J$ & & & $g_{5,31}$ & \\
         \noalign{\smallskip}\hline
      \end{tabular}
      \caption{{\Lnone} and Mubarakzyanov \cite{Mub63a,Mub63b} in low dimensions.}\label{tab1} 
   \end{table}
   According to Table \ref{tab1}, our result is an improvement of Mubarakzyanov's ones (cf. \cite[Part 4]{SW14} for more details). 
   For instance, we have removed $g_{5,24}^{-1}$ and $g_{5,26}^{-1p}$ (this also can be seen because 
   $g_{5,24}^{-1} \cong g_{5,24}^1$ and $g_{5,26}^{-1p} \cong g_{5,26}^{1p}$ by changing the sign of $X_4$).
\end{rem}

%%%Section 4
\section{The problem of classifying \Lntwo}\label{sec4}

The goal of this section is to prove Theorem \ref{Thm2}. Before giving the proof of Theorem \ref{Thm2}, 
we first explore some properties of {\Lntwo}. Recall that {\Lntwo} is the class of all $(n+2)$-dimensional 
real solvable Lie algebras having $n$-dimensional derived algebras.

%Subsection 4.1
\subsection{Description of \Lntwo}\label{subsec4.1}

To classify {\Lntwo}, we proceed in a similar way as described in Subsection \ref{subsec3.1}:
\begin{enumerate}[Step 1.]
	\item Take an $n$-dimensional real nilpotent Lie algebra $H$;
	\item\label{step2-Lie(n+2,n)}  Extend $H$ to all $(n+2)$-dimensional real solvable Lie algebras $L$ 
	which admits $H$ as derived algebra, i.e. $L^1 = H$. Afterward, classify such Lie algebras;
	\item\label{step3-Lie(n+2,n)}  Repeat two steps above for all possibilities of $H$.
\end{enumerate}

First of all, we fix an arbitrary $n$-dimensional real nilpotent Lie algebra $H$. Let $L \in \li$ with $L^1 = H$. 
Without loss of generality, we can assume
\[
	\begin{array}{l l}
		L = \s \{x_1, \ldots, x_n, y, z\}, & L^1 = H = \s \{x_1, \ldots, x_n\}.
	\end{array}
\]
By Proposition \ref{Lem2.1-deGraaf}, $L$ can be represented in the following form:
\[
	\begin{array}{l l}
		L = \R z \oplus_d K = \R z \oplus_d \left(\R y \oplus_{d'} H\right), & d \in \D(K), d' \in \D(H).
	\end{array}
\]

However, it is not true that all Lie algebras of the above forms belong to \Lntwo. 
We give a necessary and sufficient condition of the pair $(d, d')$ for which $L \in \li$.

%Proposition 4.1   
\begin{prop}\label{Pro4.1}
	Let $H$ be an $n$-dimensional real nilpotent Lie algebra, and 
	$L = \R z \oplus_d K = \R z \oplus_d \left(\R y \oplus_{d'} H\right)$ with $d \in \D(K)$, 
	$d(K) \subset H$ and $d' \in \D(H)$ as above. By renumbering, if necessary, 
	we can assume that $H^1 = \s \{x_1, \ldots, x_m\}$  for some $0 \leq m < n$. 
	Then, the following conditions are equivalent:
	\begin{enumerate}
		\item $L \in \li$;
		\item $L^1 = H$;
		\item $\s \{x_{m+1}, \ldots, x_n\} \subset d(K) + d'(H) + H^1$;
		\item\label{part4-Pro4.1} $H/H^1 = \I \td + \I \td'$, where $\td \colon K/H^1 \to K/H^1$ 
		and $\td' \colon H/H^1 \to H/H^1$ are homomorphisms induced from $d$ and $d'$ 
		on the quotient Lie algebras $K/H^1$ and $H/H^1$, respectively.
	\end{enumerate}
\end{prop}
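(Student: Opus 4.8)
The plan is to mirror the proof of Proposition~\ref{Pro3.1}, with the extra layer of the iterated extension $L = \R z \oplus_d (\R y \oplus_{d'} H)$ handled by first computing $L^1$ explicitly. Writing $L = \R z \dotplus \R y \dotplus H$ as vector spaces, the bracket relations $[z, w] = d(w)$ for $w \in K$ and $[y, h] = d'(h)$ for $h \in H$, together with the brackets internal to $H$, give
\[
	L^1 = \s \{d(y)\} + d(H) + d'(H) + H^1 = d(K) + d'(H) + H^1,
\]
since $d(K) = \R d(y) + d(H)$. Because $d(K) \subset H$, $d'(H) \subset H$ and $H^1 \subset H$ by hypothesis, this shows $L^1 \subseteq H$, the analogue of the inclusion $L^1 \subseteq K$ exploited in Proposition~\ref{Pro3.1}.

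For the equivalence of (1) and (2), I would observe that $L$ is automatically solvable and $(n+2)$-dimensional: it is a one-dimensional extension by $d$ of the solvable algebra $K = \R y \oplus_{d'} H$, which in turn is a one-dimensional extension of the solvable (indeed nilpotent) algebra $H$. Hence $L \in \li$, which requires exactly $\dim L^1 = n$, holds if and only if $L^1 = H$, using that $L^1 \subseteq H$ with $\dim H = n$. For (2) $\Leftrightarrow$ (3), since $H^1 = \s \{x_1, \ldots, x_m\}$ we have $H = \s \{x_{m+1}, \ldots, x_n\} + H^1$; as $H^1 \subseteq L^1$ always holds, the equality $L^1 = H$ is equivalent to $\s \{x_{m+1}, \ldots, x_n\} \subseteq d(K) + d'(H) + H^1$.

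For (3) $\Leftrightarrow$ (4), I would first verify that $H^1$ is invariant under both $d$ and $d'$, so that the induced maps are well-defined: for $h_1, h_2 \in H$ the Leibniz rule gives $d([h_1, h_2]) = [d(h_1), h_2] + [h_1, d(h_2)] \in H^1$, using $d(H) \subseteq H$, and the same computation applies to $d'$. Consequently $\I \td = (d(K) + H^1)/H^1$ and $\I \td' = (d'(H) + H^1)/H^1$, both contained in $H/H^1$. Adding $H^1$ to the inclusion in (3) and using $\s \{x_{m+1}, \ldots, x_n\} + H^1 = H$, condition (3) becomes $d(K) + d'(H) + H^1 = H$, which upon passing to the quotient modulo $H^1$ reads precisely $H/H^1 = \I \td + \I \td'$.

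The main obstacle is the correct computation of $L^1$ in the first paragraph: one must remember the contribution $[z, y] = d(y)$ so that the full image $d(K)$ (not merely $d(H)$) appears, and one must check well-definedness of $\td$ on $K/H^1$, which rests on the $d$-invariance of $H^1$ rather than of the larger derived ideal $K^1$. Once $L^1 = d(K) + d'(H) + H^1 \subseteq H$ is established, the remaining equivalences are routine linear-algebraic reformulations of the same equality.
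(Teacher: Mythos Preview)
Your proof is correct and follows essentially the same route as the paper's own argument: compute $L^1 = d(K) + d'(H) + H^1 \subseteq H$, then run the chain of equivalences, invoking the Leibniz rule for the $d$- and $d'$-invariance of $H^1$ to pass to the quotient. Your write-up is in fact slightly more explicit than the paper's (e.g.\ isolating the $[z,y]=d(y)$ contribution and justifying solvability of $L$), but there is no substantive difference in method.
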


\begin{proof}
	First of all, we note that:
	\begin{flalign*}
		L^1 & = \s \{[z, u] \big| u \in K\} + \s \{[y, v] \big| v \in H\} + \s \{[u', v'] \big| u', v' \in H\} \\
		& = d(K) + d'(H) + H^1 \subset H \quad \left(\text{because $d(K) \subset H$}\right).
	\end{flalign*}
	Therefore, $L \in \li$ if and only if
	\[
		\begin{array}{l l}
			& L^1 = H \\
			\Leftrightarrow & d(K) + d'(H) + H^1 = H \\
			\Leftrightarrow & d(K) + d'(H) + H^1 = \s\{x_{m+1}, \ldots, x_n\} + H^1 \\
			\Leftrightarrow & \s \{x_{m+1}, \ldots, x_n\} \subset d(K) + d'(H) + H^1 \\
		    & \hfill \left(\text{since $\s \{x_1, \ldots, x_m\} = H^1$}\right)
		\end{array}
	\]
	By the Leibniz formula, the derived algebra $H^1$ is invariant with respect to $d$ and $d'$, 
	i.e. $d(H^1), d'(H^1) \subset H^1$. Therefore, $\td \colon K/H^1 \to K/H^1$ and $\td' \colon H/H^1 \to H/H^1$ 
	are well-defined, and the last equation above means that $H/H^1 = \I \td + \I \td'$. 
	This complete the proof of Proposition \ref{Pro4.1}.
\end{proof}

Thus, each Lie algebra $L = \R z \oplus_{d} \left(\R y \oplus_{d'} H\right) \in \li$ with $L^1 = H$ 
can be seen as an extension of $H$ by a pair of derivations $(d,d')$ which satisfies Proposition \ref{Pro4.1}. 
Furthermore, an immediate consequence of Proposition \ref{Pro4.1} and Corollary \ref{Cor2.10} is as follows.

%Corollary 4.2
\begin{cor}\label{Cor4.2}
	If $L = \R z \oplus_d \left(\R y \oplus_{d'} H\right)$ with $d \in \ad(\R y \oplus_{d'} H)$ and $d' \in \ad(H)$ then $L \notin \li$. 
	In other words, all  Lie algebras extended from $H$ by a pair of inner derivations cannot belong to \Lntwo.
\end{cor}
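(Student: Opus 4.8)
The plan is to apply Corollary~\ref{Cor2.10} twice, peeling off the two successive one-dimensional extensions, so as to show that the derived algebra of $L$ cannot grow beyond $H^1$, which is strictly smaller than $H$ because $H$ is nilpotent.

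First I would handle the inner extension producing $K$. Writing $K = \R y \oplus_{d'} H$ and using the hypothesis $d' \in \ad(H)$, Corollary~\ref{Cor2.10} (the base $H$ being nilpotent, hence solvable) shows that $K$ decomposes as $K = \R y' \oplus H$ for some $y' \in K \setminus H$. Since the derived algebra of a direct sum of Lie algebras is the direct sum of the derived algebras and $\R y'$ is abelian, this gives $K^1 = H^1$. Next I would handle the outer extension producing $L$. The algebra $K$ is solvable, being a one-dimensional extension of the solvable $H$, so from $d \in \ad(K)$ Corollary~\ref{Cor2.10} applies again to $L = \R z \oplus_d K$, yielding $L = \R z' \oplus K$ and hence $L^1 = K^1$. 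Chaining the two identities gives $L^1 = K^1 = H^1$.

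Finally I would invoke the equivalence $L \in \li \Leftrightarrow L^1 = H$ supplied by Proposition~\ref{Pro4.1}. As $H$ is a nonzero nilpotent Lie algebra, its derived algebra is a proper subalgebra; concretely $\dim H^1 = m < n$ in the notation of Proposition~\ref{Pro4.1}. Thus $L^1 = H^1 \neq H$, and therefore $L \notin \li$, which is exactly the claim.

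I do not expect a genuine obstacle here: the argument is essentially bookkeeping. The only points that require care are that Corollary~\ref{Cor2.10} be applied in the correct order (first to the inner derivation $d'$ to collapse $K$ onto $H$, then to $d$ to collapse $L$ onto $K$), that solvability of the base algebra be verified at each application, and that one recall the standard fact that a nonzero nilpotent Lie algebra has a proper derived algebra, which here is already guaranteed by the inequality $m < n$.
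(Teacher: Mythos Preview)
Your proof is correct and follows exactly the route the paper intends: the paper states Corollary~\ref{Cor4.2} as ``an immediate consequence of Proposition~\ref{Pro4.1} and Corollary~\ref{Cor2.10}'', and your two applications of Corollary~\ref{Cor2.10} (first to $d'$, then to $d$) followed by the observation $\dim L^1 = \dim H^1 < n$ are precisely how that consequence unpacks. The only cosmetic remark is that, having computed $L^1 = H^1$ with $\dim H^1 = m < n$, you could conclude $L \notin \li$ straight from the definition of $\li$ rather than invoking Proposition~\ref{Pro4.1}; but this is harmless.
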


%Proposition 4.3
\begin{prop}\label{Pro4.3}
	An $L \in \li$ is decomposable if and only if there exist $L_1 \in \mathrm{Lie} \left(m_1 + 1, m_1\right)$ 
	and $L_2 \in \mathrm{Lie} \left(m_2 + 1, m_2\right)$ such that $L = L_1 \oplus L_2$ for some 
	$m_1, m_2 \geq 0$ and $m_1 + m_2 = n$.
\end{prop}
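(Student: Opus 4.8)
The plan is to prove both implications by a dimension count, paralleling the argument used for part~\ref{part1-Pro3.2} of Proposition~\ref{Pro3.2}.

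For the sufficiency ($\Leftarrow$), I would assume $L = L_1 \oplus L_2$ with $L_i \in \mathrm{Lie}(m_i+1, m_i)$ and $m_1 + m_2 = n$. Since a direct sum of Lie algebras satisfies $L^1 = L_1^1 \oplus L_2^1$, it suffices to compute $\dim L = (m_1+1)+(m_2+1) = n+2$ and $\dim L^1 = m_1 + m_2 = n$, which shows $L \in \li$; decomposability is immediate from the hypothesis.

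For the necessity ($\Rightarrow$), suppose $L \in \li$ is decomposable, say $L = L_1 \oplus L_2$ with both $L_1, L_2$ nonzero. Being ideals of the solvable algebra $L$, each $L_i$ is itself a nonzero solvable Lie algebra, so $\dim L_i^1 \leq \dim L_i - 1$. Writing $m_i \co \dim L_i^1 \geq 0$ and using $L^1 = L_1^1 \oplus L_2^1$ together with $\dim L = n+2$ and $\dim L^1 = n$, I would add the two inequalities to obtain
\[
	n = \dim L^1 = m_1 + m_2 \leq (\dim L_1 - 1) + (\dim L_2 - 1) = \dim L - 2 = n.
\]
The forced equality compels $\dim L_i^1 = \dim L_i - 1$ for each $i$, i.e. $L_i \in \mathrm{Lie}(m_i+1, m_i)$, with $m_1 + m_2 = n$, as desired.

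The only point requiring care is this equality-forcing step: the inequalities $\dim L_i^1 \leq \dim L_i - 1$ hold precisely because each $L_i$ is nonzero (guaranteed by genuine decomposability) and solvable (a nonzero solvable Lie algebra has a proper derived algebra). Once the total dimensions are pinned down by the hypothesis $L \in \li$, no other distribution of the codimensions between the summands is possible, so beyond this bookkeeping there is no genuine obstacle.
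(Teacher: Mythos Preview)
Your proof is correct and follows essentially the same approach as the paper's: both directions are handled by the dimension bookkeeping $n = \dim L_1^1 + \dim L_2^1 \leq (\dim L_1 - 1) + (\dim L_2 - 1) = n$, with solvability of the summands supplying the inequalities and the forced equality yielding $L_i \in \mathrm{Lie}(m_i+1,m_i)$. The only cosmetic difference is that the paper sets $m_i \co \dim L_i - 1$ while you set $m_i \co \dim L_i^1$; either way the squeeze argument is identical.
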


\begin{proof}
	Assume that $L$ is decomposable, i.e. $L = L_1 \oplus L_2$ for certain two real solvable Lie algebras 
	$L_1$ and $L_2$. In particular, $\dim L_1 + \dim L_2 = \dim L = n + 2$. For convenience, we set 
	\[
		\dim L_1 = m_1 + 1, \quad \dim L_2 = m_2 +1,
	\]
	with $m_1, m_2 \geq 0$ and $m_1 + m_2 = n$. It follows directly from the solvability of $L_1, L_2$ that
	\[
		\begin{cases}
			\dim L_1^1 \leq \dim L_1 - 1 = m_1, \\ \dim L_2^1 \leq \dim L_2 - 1 = m_2.
		\end{cases}
	\]
	Besides, $L^1 = L_1^1 \oplus L_2^2$ implies
	\[
		n = \dim L^1 = \dim L_1^1 + \dim L_2^1 \leq m_1 + m_2 = n.
	\]
	Equality holds if $\dim L_1^1 = m_1$ and $\dim L_2^1 = m_2$ which lead to
	\[
		L_1 \in \mathrm{Lie} \left(m_1 + 1, m_1\right), \quad L_2 \in \mathrm{Lie} \left(m_2 +1, m_2\right).
	\]
	The converse is straightforward. The proof is complete.
\end{proof}

Thanks to Proposition~\ref{Pro4.3}, we only need to pay attention to indecomposable Lie algebras.

%Proposition 4.4
\begin{prop}\label{Pro4.4}
	Let $H$ be an $n$-dimensional real nilpotent Lie algebra and $L \in \li$ with $L^1 = H$. 
	Then $L$ is decomposable if and only if there exists a basis $\{x_1, \ldots, x_n, y, z\}$ of $L$ such that 
	$L = \R z \oplus_d \left(\R y \oplus_{d'} H\right)$, $[z, y] = 0$ and the pair $(d,d')$ satisfy the following condition: 
	there exist Lie algebras $H_1, H_2 \subseteq H$ such that
	\[
		\begin{array}{l l}
			\begin{cases}
				d(H_1) \subset H_1, & d(H_2) = 0, \\
				d'(H_1) = 0, & d'(H_2) \subset H_2,
			\end{cases} &
			\text{and $H_1 \oplus H_2 = H$}.
		\end{array}
	\]
	In particular, if $H$ is indecomposable then $L$ is decomposable if and only if 
	$L \cong \R \oplus \bl$ with $\bl \in \Li$ and $\bl^1 = L^1 = H$.
\end{prop}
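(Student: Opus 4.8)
The plan is to establish the main equivalence by passing between the abstract notion of decomposability and the concrete presentation $L = \R z \oplus_d(\R y \oplus_{d'} H)$, and then to read off the ``in particular'' clause from the indecomposability of $H$. Throughout I will use that, by our conventions, a Lie-algebra direct sum $H_1 \oplus H_2 = H$ means not only that $H = H_1 \dotplus H_2$ as vector spaces but also that $[H_1, H_2] = 0$; this last fact is what makes the two directions fit together.

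For the forward implication I would assume $L$ is decomposable and invoke Proposition~\ref{Pro4.3} to write $L = L_1 \oplus L_2$ with $L_i \in \mathrm{Lie}(m_i+1, m_i)$ and $m_1 + m_2 = n$. Putting $H_1 \co L_1^1$ and $H_2 \co L_2^1$, each is a codimension-one ideal of its factor, so choosing $z \in L_1 \setminus H_1$ and $y \in L_2 \setminus H_2$ gives $L_1 = \R z \dotplus H_1$ and $L_2 = \R y \dotplus H_2$. Since $L^1 = L_1^1 \oplus L_2^1$, the splitting $H = H_1 \oplus H_2$ recovers $L^1$, and adjoining $\{z,y\}$ to a basis of $H$ adapted to this splitting yields a basis of $L$ with $L = \R z \oplus_d(\R y \oplus_{d'} H)$. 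It remains to compute $d = \ad_z|_K$ and $d' = \ad_y|_H$ on this basis: because $[L_1, L_2] = 0$ we get $[z,y] = 0$, $d(H_1) = [z, L_1^1] \subset H_1$, $d(H_2) = [z, L_2^1] = 0$, while $d'(H_1) = [y, L_1^1] = 0$ and $d'(H_2) = [y, L_2^1] \subset H_2$. These are exactly the asserted conditions, and $d(K) \subset H$ holds since $d(y)=0$ and $d(H) \subset H$.

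For the converse I would take the given data and set $L_1 \co \R z \dotplus H_1$ and $L_2 \co \R y \dotplus H_2$. Each is a subalgebra: $[z, H_1] = d(H_1) \subset H_1$ together with $[H_1, H_1] \subset H_1$ gives $[L_1, L_1] \subset L_1$, and symmetrically for $L_2$. The four cross-brackets all vanish, namely $[z,y]=0$ by hypothesis, $[z, H_2] = d(H_2) = 0$, $[y, H_1] = -d'(H_1) = 0$, and $[H_1, H_2] = 0$ because $H_1 \oplus H_2 = H$ is a Lie-algebra direct sum; hence $[L_1, L_2] = 0$. Since $L = \R z \dotplus \R y \dotplus H = L_1 \dotplus L_2$ as vector spaces, we conclude $L = L_1 \oplus L_2$, so $L$ is decomposable.

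Finally, for the ``in particular'' statement I would feed the main equivalence into the hypothesis that $H$ is indecomposable: a splitting $H = H_1 \oplus H_2$ then forces $H_1 = 0$ or $H_2 = 0$. If $H_1 = 0$ then $H = H_2$ and $d(H) = d(H_2) = 0$, so together with $d(y) = [z,y] = 0$ we get $d = 0$ on $K$; hence $L = \R z \oplus \bl$ with $\bl = \R y \oplus_{d'} H$, and $\bl^1 = d'(H) + H^1 = L^1 = H$ shows $\bl \in \Li$. The case $H_2 = 0$ is symmetric, and the reverse direction is immediate since $L \cong \R \oplus \bl$ is visibly decomposable. I expect the main effort to lie not in any single deep step but in the careful bookkeeping of the forward direction, namely choosing the complementary vectors $z, y$ and a basis of $H$ so that the semidirect presentation $\R z \oplus_d(\R y \oplus_{d'} H)$ reproduces \emph{precisely} the stated invariance and annihilation conditions on the pair $(d, d')$.
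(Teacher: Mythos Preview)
Your proposal is correct and follows essentially the same route as the paper: for the forward direction you invoke Proposition~\ref{Pro4.3}, set $H_i = L_i^1$, and pick complements $z \in L_1 \setminus H_1$, $y \in L_2 \setminus H_2$ exactly as the paper does; for the converse you exhibit $L = (\R z \dotplus H_1) \oplus (\R y \dotplus H_2)$, which is the decomposition the paper writes down in one line. Your treatment of the ``in particular'' clause is actually more explicit than the paper's, which leaves that deduction to the reader.
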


\begin{proof}
	\begin{enumerate}
		\item[$(\Leftarrow)$] It is obvious. More precisely, we have
		\[
			L = \left(\R z \oplus_d H_1\right) \oplus \left(\R y \oplus_{d'} H_2\right).
		\]
		
		\item[$(\Rightarrow)$] According to Proposition \ref{Pro4.3}, there exist $L_1 \in \mathrm{Lie}(m_1+1, m_1)$ 
		and $L_2 \in \mathrm{Lie}(m_2+1, m_2)$ such that
		\[
			\begin{array}{l l}
				L = L_1 \oplus L_2, & m_1, m_2 \geq 0, m_1 + m_2 = n.
			\end{array}
		\]
		Then $H = L_1^1 \oplus L_2^1 \co H_1 \oplus H_2$. Assume that
		\[
			\begin{array}{l l}
				H_1 = \s \{x_1, \ldots, x_{m_1}\}, & H_2 = \s \{x_{m_1+1}, \ldots, x_n\}.
			\end{array}
		\]
		We can always supplement $z$ to $\{x_1, \ldots, x_{m_1}\}$ and $y$ to $\{x_{m_1+1}, \ldots, x_n\}$ 
		to get bases of $L_1$ and $L_2$, respectively. By this way, we have $[z, y] = 0$, $H = \s \{x_1, \ldots, x_n\}$ 
		and $L = \R z \oplus_d \left(\R y \oplus_{d'} H\right)$ in which
		\[
			\begin{cases}
				[z, H_1] = d(H_1) \subset H_1, & [z, H_2] = d(H_2) = 0, \\
				[y, H_1] = d'(H_1) = 0, & [y, H_2] = d'(H_2) \subset H_2.
			\end{cases}
		\]
	\end{enumerate}
	\noindent The proof of Proposition \ref{Pro4.4} is complete.
\end{proof}

%Subsection 4.2
\subsection{Proof of Theorem \ref{Thm2}}\label{subsec4.2}

We will show that the problem of classifying {\Lntwo} contains a wild problem. 
In fact, let us consider the following class
\[
	\lic \co \left\lbrace L \in \li \, \big| \, L^1 = \R^n \right\rbrace \subset \li.
\]
We will prove that the problem of classifying {\Lntwoc} is wild.

Let $L = \s \{x_1, \ldots, x_n, y, z\} \in \lic$ such that 
\[
	L^1 = \s \{x_1, \ldots, x_n\} = \R^n.
\] 
By Proposition \ref{Lem2.1-deGraaf}, we represent $L$ in the following form: 
\[
L = \R z \oplus_d K = \R z \oplus_d \left(\R y \oplus_{d'} \R^n\right),
\]	
where $d \in \D(K)$ and $d' = \ad_y|_{\R^n} = a_y \in \D(\R^n)$ satisfy equivalent conditions in 
Proposition \ref{Pro4.1}. For simplicity, we assume additionally two conditions as follows:
\begin{itemize}
	\item $[z, y] = 0$ which allows us to identify
	\[
		d = \ad_z|_{\R^n} \oplus 0 = a_z \oplus 0 \equiv a_z,
	\]
	and consider the pair $(d, d')$ as derivations of $\R^n$.
	\item $d$ and $d'$ are outer derivations of $\R^n$. Moreover, they must be non-proportional, 
	in order to guarantee that $L$ is indecomposable.
\end{itemize} 
Even if we treat this simpler case, the result is as follows.

%Proposition 4.5
\begin{prop}\label{Pro4.5}
	Let $L_i = \R z \oplus_{d_i} \left(\R y \oplus_{d'_i} \R^n \right)$ for $i = 1, 2$, 
	be two Lie algebras in {\Lntwoc} which satisfy all of the above conditions. 
	Then $L_1 \cong L_2$ if and only if there exists $\sigma \in \Au(\R^n)$ and $\begin{bmatrix} \alpha & \beta \\ \gamma & \delta \end{bmatrix} \in \GL_2 (\R)$ such that
	\begin{equation}\label{Cond4.1}
		\begin{cases}
			\sigma d_1 \sigma^{-1} = \gamma d_2 + \alpha d_2' \\ \sigma d'_1 \sigma^{-1} = \delta d_2 + \beta d_2'  
		\end{cases}
	\end{equation}
\end{prop}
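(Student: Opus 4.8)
The plan is to prove this as an isomorphism criterion by analyzing what an arbitrary Lie algebra isomorphism $\ts \colon L_1 \to L_2$ must do to the distinguished subspaces. Since both $L_i$ lie in \lic, Proposition~\ref{Pro4.1} guarantees that $L_i^1 = \R^n$ for each $i$, so any isomorphism $\ts$ necessarily maps $\R^n = L_1^1$ onto $\R^n = L_2^1$. Hence the restriction $\sigma \co \ts|_{\R^n} \colon \R^n \to \R^n$ is a well-defined automorphism of $\R^n$, i.e. $\sigma \in \Au(\R^n)$. The remaining freedom is how $\ts$ acts on the complementary plane $\s\{y, z\}$; because $\ts$ descends to an isomorphism of the two-dimensional quotient $L_i / L_i^1$, we may write
\[
	\ts(z) = \gamma y + \alpha z + (\text{element of } \R^n), \qquad
	\ts(y) = \delta y + \beta z + (\text{element of } \R^n),
\]
where the matrix $\begin{bmatrix} \alpha & \beta \\ \gamma & \delta \end{bmatrix}$ must be invertible precisely because $\ts$ induces an isomorphism on the quotient plane. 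The $\R^n$-valued translation parts are inner-derivation corrections that, by Proposition~\ref{Pro2.8}, do not affect the isomorphism class, so they can be absorbed and ignored in the essential computation.

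For the forward direction I would impose the bracket-preservation condition $\ts([w, x]) = [\ts(w), \ts(x)]$ for $w \in \{y, z\}$ and $x \in \R^n$. Using $[z, x] = d_1(x)$ and $[y, x] = d_1'(x)$ in $L_1$, together with the identifications $d_i = a_{z_i}$ and $d_i' = a_{y_i}$ as commuting outer derivations of $\R^n$, the bracket $[\ts(z), \sigma(x)]$ unfolds into $\gamma\, d_2(\sigma(x)) + \alpha\, d_2'(\sigma(x))$ (the $\R^n$-translation part of $\ts(z)$ brackets trivially with $\sigma(x)$ since $\R^n$ is abelian). Setting this equal to $\sigma\, d_1(x)$ for all $x \in \R^n$ yields the first equation of \eqref{Cond4.1}, and the analogous computation starting from $[y, x] = d_1'(x)$ yields the second. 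The converse direction is then obtained by reversing these steps: given $\sigma$ and the $\GL_2$-matrix satisfying \eqref{Cond4.1}, one defines $\ts$ by $\ts|_{\R^n} = \sigma$, $\ts(z) = \gamma y + \alpha z$, $\ts(y) = \delta y + \beta z$, checks linearity and invertibility (invertibility of $\sigma$ and of the $\GL_2$-block ensures $\ts$ is bijective), and verifies that all brackets are preserved, including the trivial ones $[z,y]=0$ in each $L_i$.

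The main obstacle I expect is bookkeeping the $[z,y]$ bracket together with the interplay between the $\GL_2$-action on the pair $(d,d')$ and the two simplifying conditions assumed in the setup, namely $[z, y] = 0$ and the identification of $d$ with $a_z$ via the splitting $d = a_z \oplus 0$. I must verify that the candidate isomorphism $\ts$ respects $[\ts(z), \ts(y)] = 0$; expanding $[\gamma y + \alpha z,\, \delta y + \beta z]$ uses $[z,y] = 0$ in $L_2$ and the fact that the cross terms vanish, so this is consistent, but it is the step where the assumption $[z,y]=0$ is genuinely used and where an oversight would break the argument. A secondary subtlety is confirming that the induced action on pairs of derivations is exactly the weak-similarity action of Definition~\ref{Definition2.4}: the matrix $\begin{bmatrix} \alpha & \beta \\ \gamma & \delta \end{bmatrix}$ appears transposed relative to a naive reading, because it acts on $(d, d')$ by combining rows according to how $\ts$ mixes $z$ and $y$, and care is needed to record the correct coefficients $\gamma, \alpha$ (resp. $\delta, \beta$) in \eqref{Cond4.1} rather than their transposes. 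Once these correspondences are pinned down, the proof reduces to the linear-algebraic identities above, and the result sets up the subsequent reduction of the classification of \lic\ to weak similarity of pairs of matrices, which is the wildness input invoked in Remark~\ref{Remark2.5}.
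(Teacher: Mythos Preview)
Your approach is essentially identical to the paper's: restrict $\ts$ to the derived algebra $\R^n=L_i^1$, read off a $\GL_2(\R)$ matrix from the action of $\ts$ on $\{y,z\}$ modulo $\R^n$, and compare brackets $[w,x]$ for $w\in\{y,z\}$, $x\in\R^n$, with the converse given by the obvious explicit $\ts$. The only slip is the bookkeeping one you yourself anticipated: with $\ts(z)=\gamma y+\alpha z+\cdots$ as you wrote it, one gets $[\ts(z),\sigma(x)]=\alpha\,d_2(\sigma(x))+\gamma\,d_2'(\sigma(x))$, so to land on \eqref{Cond4.1} you need $\ts(z)=\gamma z+\alpha y+\cdots$ and $\ts(y)=\delta z+\beta y+\cdots$, which is exactly the paper's choice.
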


\begin{proof}
	\begin{enumerate}
		\item[$\left(\Rightarrow\right)$] If $\ts \colon L_1 \to L_2$ is an isomorphism, then so is $\sigma \co \ts|_{\R^n}$. 
		Setting
		\[
			\begin{cases}
				\ts(z) = \gamma z + \alpha y + u, & u \in \R^n, \\
				\ts(y) = \delta z + \beta y + v, & v \in \R^n. 
			\end{cases}
		\]
		Since $d_i$ and $d'_i$ ($i =1, 2$) are non-proportional, $\det \begin{bmatrix} \alpha & \beta \\ \gamma & \delta \end{bmatrix} \neq 0$, i.e. $\begin{bmatrix} \alpha & \beta \\ \gamma & \delta \end{bmatrix} \in \GL_2 (\R)$. 
		Now, for arbitrary $x \in \R^n$, we have:
		\[
			\begin{array}{l l}
				& \ts([z, x]) = \left[\ts(z), \ts(x)\right] \\
				\Leftrightarrow & \sigma([z, x])=[\gamma z + \alpha y + u, \sigma(x)] \\
				\Leftrightarrow & \sigma([z, x])=[\gamma z + \alpha y, \sigma(x)] \\
				\Leftrightarrow & \sigma d_1(x) = (\gamma d_2 + \alpha d'_2)\sigma(x).
			\end{array}
		\]
		Thus, $\sigma d_1 \sigma^{-1} = \gamma d_2 + \alpha d'_2$.
		Similarly, replacing $z$ by $y$ we get $\sigma d'_1 \sigma^{-1} = \delta d_2 + \beta d'_2$.
	
		\item[$(\Leftarrow)$] Assume that there exists $\sigma \in \Au ({\R}^n)$ and $\begin{bmatrix} \alpha & \beta \\ \gamma & \delta \end{bmatrix} \in \GL_2 (\R)$ which satisfy Condition \eqref{Cond4.1}. 
		We define $\ts \colon L_1 \to L_2$ as follows:
		\[
			\begin{cases}
				\ts (x) = \sigma (x), & x \in \R^n, \\
				\ts(z) = \gamma z + \alpha y, \\
				\ts(y) = \delta z + \beta y.
			\end{cases}
		\]
		Since $\det \begin{bmatrix} \alpha & \beta \\ \gamma & \delta \end{bmatrix} \neq 0$, $\ts$ 
		is a linear isomorphism. Moreover, it also preserves Lie brackets. In fact, the equation 
		$\sigma d_1 \sigma^{-1} = \gamma d_2 + \alpha d_2'$ (resp. $\sigma d'_1\sigma^{-1} = \delta d_2 + \beta d'_2$)
		implies $\ts([z, x]) = \left[\ts(z), \ts(x)\right]$ (resp. $\ts([y, x]) = \left[\ts(y), \ts(x)\right]$) for every $x \in \R^n$. 
		Besides, $\ts([z, y]) = \left[\ts(z), \ts(y)\right]$ is equivalent to 
		$[z, y] \det \begin{bmatrix} \alpha & \beta \\ \gamma & \delta \end{bmatrix} = 0$ which is obviously true 
		because of the assumption $[z, y] = 0$.	Thus $\ts$ is an isomorphism and $L_1 \cong L_2$.
	\end{enumerate}
	\noindent The proof of Proposition \ref{Pro4.5} is complete.
\end{proof}

\begin{pot2}
	According to Definition \ref{Definition2.4}, two pairs $\left(d_1, d'_1\right)$ and $\left(d_2, d'_2\right)$ 
	satisfied Condition \eqref{Cond4.1} in Proposition  \ref{Pro4.5} are weakly similar. By Remark \ref{Remark2.5}, 
	the problem of classifying pairs of matrices up to weak similarity, even if the pairs of commuting matrices, is wild. 
	That means the problem of classifying {\Lntwoc} is wild. So is the problem of classifying \Lntwo.\hfill{$\square$}
\end{pot2}

%Remark 4.6
\begin{rem}\label{Rem4.6}
	The wildness of the problem of classifying {\Lntwo} is slightly different from that of {\Lnone} because it is wild 
	not only in Step \ref{step2-Lie(n+2,n)} but also in Step \ref{step3-Lie(n+2,n)} in Subsection \ref{subsec4.1}.
\end{rem}

%Section 5
\section{A special case of \Lntwo}\label{sec5}
   
This section is devoted to consider a special case of {\Lntwo}.
	
Let $H$ be an $n$-dimensional real nilpotent Lie algebra. By Subsection \ref{subsec4.1}, 
all $L \in \li$ with $L^1 = H$ are of the following form
\[
	L = \R z \oplus_d K = \R z \oplus_d \left(\R y \oplus_{d'} H\right),
\]
where $d \in \D(K)$ and $d' \in \D(H)$ satisfy equivalent conditions in Proposition \ref{Pro4.1}. 
As we have seen in Subsection \ref{subsec4.2}, if the pair $(d,d')$ consists of outer derivations 
then the classification problem is wild. Therefore, it is natural to consider a subclass {\Lntwoa} 
consists of all Lie algebras of the forms:
\[
	L = \R z \oplus_d \left(\R y \oplus_{d'} H\right),
\]
where $H$ is an arbitrary $n$-dimensional real nilpotent Lie algebra, and the following conditions hold:
\begin{itemize}
	\item $(d, d')$ satisfies the equivalent conditions in Proposition \ref{Pro4.1}, or equivalently, $L$ belongs to \Lntwo;
	\item $(d, d')$ contains at least one inner derivation.
\end{itemize}

\subsection{Proof of Theorem \ref{Thm3}}

The proof of Theorem \ref{Thm3} will take place through certain steps in which we need some support results as follows.

%Proposition 5.1
\begin{prop}\label{Pro5.1}
	For any indecomposable Lie algebra $L \in \lia$ with $L^1 = H$, there exist $y, z \in L \setminus H$ 
	and $d \in \D(K) \setminus \ad(K)$ such that $L = \R z \oplus_d (\R y \oplus H)$ where $K \co \R y \oplus H$.	
\end{prop}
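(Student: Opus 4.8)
The plan is to start from the defining presentation of an element of {\Lntwoa} and then rearrange the two derivations using the indecomposability hypothesis together with Corollaries~\ref{Cor2.10} and~\ref{Cor4.2}. By the definition of {\Lntwoa}, the given $L$ can be written as $L = \R z \oplus_d (\R y \oplus_{d'} H)$, where $K \co \R y \oplus_{d'} H$, the pair $(d, d')$ satisfies the equivalent conditions of Proposition~\ref{Pro4.1}, and at least one of $d, d'$ is an inner derivation. Since $H$ is nilpotent and $K$ is a one-dimensional extension of $H$, both $H$ and $K$ are solvable, so Corollaries~\ref{Cor2.10} and~\ref{Cor4.2} apply throughout.

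First I would show that $d$ cannot be the inner member of the pair. Indeed, if $d \in \ad(K)$, then Corollary~\ref{Cor2.10} forces $L = \R z \oplus_d K$ to be decomposable, contradicting the hypothesis that $L$ is indecomposable. Hence $d \in \D(K) \setminus \ad(K)$, so $d$ is outer. Because $L$ belongs to {\Lntwoa}, at least one of $d, d'$ is inner, and since it is not $d$, we conclude $d' \in \ad(H)$. (This is consistent with Corollary~\ref{Cor4.2}, which already forbids $d$ and $d'$ from being simultaneously inner, as otherwise $L \notin \li$.)

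Next, knowing that $d' \in \ad(H)$, I would apply Corollary~\ref{Cor2.10} to the subalgebra $K = \R y \oplus_{d'} H$ itself. Writing $d' = \ad_u$ with $u \in H$, the corollary produces $y' \co y - u \in K \setminus H$ such that $K = \R y' \oplus H$ is a direct sum of Lie algebras, i.e. $[y', H] = 0$. Relabelling $y \co y'$, the subalgebra $K$ is unchanged as a Lie algebra---only the chosen complement of $H$ inside $K$ has been replaced---so $d = \ad_z|_K$ is literally the same derivation as before. This yields the desired presentation $L = \R z \oplus_d (\R y \oplus H)$ with $d \in \D(K) \setminus \ad(K)$, and both $y, z$ lie in $L \setminus H$.

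The one point that deserves care is the assertion that $d$ stays outer after the substitution $y \mapsto y'$. This is safe precisely because membership in $\D(K) \setminus \ad(K)$ depends only on the Lie algebra structure of $K$ and on the operator $\ad_z|_K$, and neither of these is altered by changing the basis vector complementary to $H$; it is only the splitting of $K$ as $\R y \oplus H$ that is affected, through the absorption of the inner derivation $d'$. I expect the main bookkeeping to lie in making this invariance explicit, rather than in any substantial computation.
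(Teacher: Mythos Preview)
Your proof is correct and follows essentially the same route as the paper's: start from the defining presentation $L = \R z \oplus_d (\R y \oplus_{d'} H)$, rule out $d$ being inner via Corollary~\ref{Cor2.10} and indecomposability, deduce that $d'$ is the inner one, and then absorb $d'$ into $H$ using Corollary~\ref{Cor2.10} again to rewrite $K$ as a direct sum. Your added remark that $d$ remains outer after replacing $y$ by $y-u$ is sound and merely makes explicit what the paper leaves implicit, since $K$, $\D(K)$, and $\ad(K)$ are unchanged by that choice of complement.
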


\begin{proof}
	Since $L \in \lia$ with $L^1 = H$, there exist $y', z \in L \setminus H$ as well as $d' \in \D(H)$ 
	and $d \in \D(\R y' \oplus_{d'} H)$ such that  
	\[
		L = \R z \oplus_d \left(\R y' \oplus_{d'} H\right).
	\]
	Set $K \co \R y' \oplus_{d'} H$. First of all, we note that
	\begin{itemize}
		\item By Corollary \ref{Cor4.2}, $d$ and $d'$ cannot be inner derivations simultaneously;
		\item If $d \in \ad(K)$ then $L$ is decomposable by Corollary \ref{Cor2.10} which 
		conflicts with the indecomposability of $L$.
	\end{itemize}
	Therefore, the pair $(d,d')$ contains one and only one inner derivation which is exactly $d'$, 
	i.e. $d' \in \ad(H)$. Taking account of Corollary \ref{Cor2.10}, we have
	\[
		\begin{array}{l l}
			K= \R y' \oplus_{d'} H = \R y \oplus H, & \text{for some $y \in K \setminus H$}.
		\end{array}
	\]
	Thus $L = \R z \oplus_d K = \R z \oplus_d \left(\R y \oplus H\right)$. The proof is complete.
\end{proof}

According to Proposition \ref{Pro5.1}, to classify {\Lntwoa}, we need to point out conditions 
of $d_1$ and $d_2$ such that two Lie algebras
\[
	\begin{array}{l l}
		L_i = \R z \oplus_{d_i} K = \R z \oplus_{d_i} (\R y \oplus H), &  i = 1, 2,
	\end{array}
\]
determined by $d_1, d_2 \in \D(K) \setminus \ad(K)$ are isomorphic. 
To this end, we next explore some additional properties of $d$.

%Proposition 5.2
\begin{prop}\label{Pro5.2}
	If $L = \R z \oplus_d (\R y \oplus H) \in \lia$ then $d|_H = a_z$ is an outer derivation of $H$.
\end{prop}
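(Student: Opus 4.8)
The plan is first to verify the identity $d|_H = a_z$ and then to deduce outerness by a dimension count. Recall that in the present setting $L^1 = H$. Since $z \in L$ and $K = \R y \oplus H \subseteq L$, we have $d(K) = [z, K] \subseteq [L, L] = H$; in particular $d(H) \subseteq H$, so that $d|_H$ is a genuine derivation of $H$. Moreover, for each $x \in H$ the identity $a_z(x) = [z, x] = d(x)$ shows that $d|_H = a_z$. Hence the entire content of the statement reduces to proving $d|_H \notin \ad(H)$, which I would establish by contradiction.

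So suppose $d|_H = \ad_w$ for some $w \in H$. The crucial step is to compute $L^1$ explicitly. Because $K = \R y \oplus H$ is a \emph{direct} sum of Lie algebras, the element $y$ is central in $K$ and $K^1 = H^1$; therefore
\[
	L^1 = d(K) + K^1 = \R d(y) + d(H) + H^1.
\]
Under the assumption $d|_H = \ad_w$ we have $d(H) = [w, H] \subseteq H^1$, so the display collapses to $L^1 = \R d(y) + H^1$, and consequently $\dim L^1 \le 1 + \dim H^1$.

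The decisive ingredient is then a codimension estimate for the derived algebra of a nilpotent Lie algebra: if $H$ is nilpotent with $\dim H = n \ge 2$, then $\dim H^1 \le n-2$. To see this I would argue that otherwise $\dim(H/H^1) = 1$, so one could write $H = \R x \dotplus H^1$; then $[H,H] = [\R x, H^1] + [H^1, H^1] \subseteq [H, H^1]$, whence $H^1 \subseteq [H, H^1] \subseteq H^1$ and $H^1 = [H, H^1]$. Iterating this identity gives $H^1 = [H,[H,\ldots,[H,H^1]]]$, which vanishes by nilpotency, so $H^1 = 0$ and $\dim(H/H^1) = n \ge 2$, a contradiction. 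Combining this with the previous paragraph yields $\dim L^1 \le 1 + (n-2) = n-1 < n = \dim H$, contradicting $L^1 = H$. Therefore $d|_H = a_z$ cannot be inner, which completes the argument.

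I expect the main obstacle to be the codimension-two estimate $\dim H^1 \le n-2$ and the recognition that it is precisely what forces $\dim L^1 < n$; everything else is the routine bracket bookkeeping in the direct sum $K = \R y \oplus H$, together with the observation that an inner $d|_H$ pushes the whole image $d(H)$ into $H^1$, thereby costing exactly the one dimension that the estimate then denies.
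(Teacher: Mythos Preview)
Your proof is correct and reaches the same contradiction as the paper, but by a genuinely different route. After assuming $d|_H = \ad_u$, the paper replaces $z$ by $z-u$ so that $[z,H]=0$, obtains $H = H^1 + \R[z,y]$ and hence $\dim H^1 = n-1$, then uses the Jacobi identity to show $[z,y]\in\Z(H)$; this exhibits $H$ as the Lie-algebra direct sum $H^1 \oplus \R[z,y]$, contradicting the indecomposability of $H\in\mathrm{Lie}(n,n-1)$ established earlier in Proposition~\ref{Pro3.2}. You instead bypass both the Jacobi computation and the appeal to Proposition~\ref{Pro3.2} by proving directly, via the lower central series, the well-known fact that a nilpotent $H$ with $\dim H\ge 2$ satisfies $\dim H^1\le n-2$, and then reading off $\dim L^1\le n-1$ from $L^1=\R d(y)+H^1$. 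Your argument is more self-contained and slightly shorter; the paper's argument, on the other hand, stays within the structural language developed earlier and yields the extra geometric information that the obstruction is precisely a central complement to $H^1$. Both approaches are ultimately equivalent manifestations of the same fact that a nilpotent Lie algebra cannot have a codimension-one derived ideal.
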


\begin{proof}
	Assume that $d|_H = a_z = \ad_u$ with $u \in H$. By changing $z' = z - u$ we have $[z', x] = 0$ for all $x \in H$. 
	Therefore, without loss of generality, we can assume that $[z, H] = 0$. 
	By this way, all Lie brackets of $L$ are determined by the original ones of $H$ and $[z, y] = d(y) \in H$. 
	Taking account of the fact $L^1 = H$, this means that
	\begin{equation}\label{eq5.1}
		H = H^1 + \s \{[z, y]\}.
	\end{equation}
	Since $\dim H = n$ and $\dim H^1 <n$, it implies $\dim H^1 = n - 1$ and $[z, y] \notin H^1$. 
	Therefore, $H \in \mathrm{Lie}(n,n - 1)$ and $[z, y] \in H \setminus H^1$.
	
	On the other hand, it follows from the Jacobi identity that
	\[
		  [[z, y], x] = [[x, y], z] + [[z, x], y] = 0, \quad \text{for all $x \in H$},
	\]
	i.e. $[z, y] \in \Z(H)$, and equation \eqref{eq5.1} becomes to
	\[
		H = H^1 \oplus \s \{[z, y]\}
	\]
	which conflicts with $H \in \mathrm{Lie}(n,n - 1)$ and Proposition \ref{Pro3.2}. 
	So $d|_H = a_z$ cannot be an inner derivation of $H$. The proof is complete.
\end{proof}

%Proposition 5.3
\begin{prop}\label{Pro5.3}
	Let $L = \R z \oplus_d (\R y \oplus H) \in \lia$. Then the following assertions are equivalent:
	\begin{enumerate}
		\item $L$ is decomposable;
		\item $L \cong \R \oplus \bl$, where $\bl \in \Li$ and $\bl^1 = L^1 = H$;
		\item $[z, y] \in d(\Z(H))$.
	\end{enumerate}
	In particular, if $H = \R^n$ then $L$ is decomposable if and only if $d|_{\R^n} = a_z$ is nonsingular.
\end{prop}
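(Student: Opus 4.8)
The plan is to prove the three-way equivalence cyclically, $(1) \Rightarrow (2) \Rightarrow (3) \Rightarrow (1)$, and then treat the special case $H = \R^n$ separately. The overall strategy rests on the structural result in Proposition \ref{Pro5.2}, which guarantees $d|_H = a_z$ is an outer derivation of $H$, together with the characterization of decomposability for $\li$ already established in Proposition \ref{Pro4.4}. Since $L = \R z \oplus_d(\R y \oplus H)$ has the inner-derivation slot already trivialized (the $\R y$ factor acts trivially on $H$), the only remaining bracket data beyond $H$ itself are $d|_H = a_z$ and the single vector $[z,y] = d(y) \in H$.

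First I would establish $(1) \Leftrightarrow (2)$. Since $L \in \lia$ is indecomposable-or-not, I appeal directly to the last sentence of Proposition \ref{Pro4.4}: when $H$ is indecomposable, decomposability of $L$ forces $L \cong \R \oplus \bl$ with $\bl \in \Li$ and $\bl^1 = H$. For the general $H$ I would argue that any nontrivial decomposition $L = L_1 \oplus L_2$ from Proposition \ref{Pro4.3}, combined with the fact that one of the two extending derivations (namely $d'$) is already inner and hence removed, must split off a one-dimensional abelian direct summand $\R z'$; the complementary factor $\bl$ then satisfies $\dim \bl = n+1$ and $\bl^1 = H$, so $\bl \in \Li$. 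The reverse implication $(2) \Rightarrow (1)$ is immediate, since $\R \oplus \bl$ is by definition decomposable.

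Next I would prove $(2) \Leftrightarrow (3)$, which is the computational heart of the statement. The idea is that $L \cong \R \oplus \bl$ precisely when we can find a new generator $z' = z - u$ (with $u \in H$, adjusting by an inner derivation via Proposition \ref{Pro2.8}) together with a suitable replacement of $y$ that splits off a central line. Writing $[z,y] = d(y) =: w \in H$, the condition for peeling off a direct $\R$-summand spanned by some $z' = \gamma z + \text{(lower terms)}$ commuting with all of $\bl$ amounts to being able to absorb $w$ into the image $d(\Z(H))$: changing $y$ by $y' = y - \zeta$ with $\zeta \in \Z(H)$ alters $[z,y]$ by $-d(\zeta)$, and $\zeta$ central is exactly what keeps $\R y' \oplus H$ an abelian-by-$H$ extension of the same type. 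Thus $w$ can be killed by such a change if and only if $w \in d(\Z(H))$, which is condition $(3)$. The Jacobi-identity computation from Proposition \ref{Pro5.2} showing $[z,y] \in \Z(H)$ is the key ingredient that makes the correction vector $\zeta$ forced to lie in $\Z(H)$.

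The main obstacle will be verifying carefully that the decomposition in $(2)$ \emph{must} take the form $\R \oplus \bl$ with the $\R$-factor abelian and central, rather than some other partition of dimensions; this is where I expect to lean hardest on Proposition \ref{Pro4.4} and on the indecomposability arguments of Section \ref{sec4}, checking that the inner-derivation hypothesis defining \Lntwoa\ rules out the genuinely two-dimensional non-abelian splittings. Finally, for the special case $H = \R^n$, I note $H^1 = 0$ so $\Z(H) = \R^n = H$ and $d(\Z(H)) = d(\R^n) = \I(a_z)$; condition $(3)$ then reads $[z,y] \in \I(a_z)$, and since $[z,y]$ ranges over all of $H$ as $y$ varies (by the rank condition of Proposition \ref{Pro4.1}), decomposability becomes equivalent to $a_z$ being surjective, i.e. nonsingular. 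I would close by remarking that the equivalence $d(\Z(H)) = \I(a_z)$ in this case makes $(3)$ collapse to the stated nonsingularity criterion, completing the proof.
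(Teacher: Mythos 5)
Your overall architecture ($1 \Rightarrow 2 \Rightarrow 3 \Rightarrow 1$, then the $H=\R^n$ specialization) is the paper's, and your $(3)\Rightarrow(1)$ step and the special case are handled essentially as the paper handles them. However, there is a genuine gap precisely at the point you defer as ``the main obstacle'': the implication $(1)\Rightarrow(2)$ for arbitrary $H$. Proposition~\ref{Pro4.4} cannot be leaned on here: its final assertion assumes $H$ is indecomposable, whereas Proposition~\ref{Pro5.3} makes no such hypothesis, and the general criterion of Proposition~\ref{Pro4.4} does not by itself exclude a splitting $L = L_1 \oplus L_2$ with $\dim L_1^1 = m_1 \geq 1$ and $\dim L_2^1 = m_2 \geq 1$. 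The paper closes this case with an argument you never supply: writing the extending generators as $z' = a_1 z + b_1 y + u_1$ and $y' = a_2 z + b_2 y + u_2$ with $u_i \in H$, the relation $[y,H]=0$ and the nilpotency of $\ad_{u_i}$ show that $a_i = 0$ would make $L_i$ nilpotent, which is impossible since a nilpotent Lie algebra cannot have a nonzero derived algebra of codimension one; hence $a_1 a_2 \neq 0$ when $m_1, m_2 \geq 1$. Choosing $x_{i_0} \in H_1$ with $[z', x_{i_0}] \notin H_1^1$ then gives $[z, x_{i_0}] \notin H^1$, so $[y', x_{i_0}] = a_2[z, x_{i_0}] + [u_2, x_{i_0}] \neq 0$, contradicting $[L_1, L_2] = 0$. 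Without this argument (or a substitute), assertion $(2)$ is not established and the whole chain collapses.

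A second, smaller defect is in $(2)\Rightarrow(3)$. Condition $(2)$ only hands you \emph{some} central generator $z' = \alpha_1 z + \beta_1 y + v_1$ of the $\R$-summand; your reduction to a change of the form $y' = y - \zeta$ with $\zeta \in \Z(H)$ presumes $\alpha_1 = 0$, and that must be proved. It does follow from Proposition~\ref{Pro5.2}: $0 = [z', x] = \alpha_1 d(x) + [v_1, x]$ for all $x \in H$ would exhibit $d|_H = a_z$ as inner if $\alpha_1 \neq 0$. After that, centrality of $z'$ forces $v_1 \in \Z(H)$ and $\beta_1 \neq 0$, and expanding $0 = [z', y']$ (or $0=[z',z]$) yields $[z, y] = -\frac{1}{\beta_1} d(v_1) \in d(\Z(H))$. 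The paper instead obtains $\alpha_1 = 0$ from a rank argument (pick $x_{i_0}$ with $[y', x_{i_0}] \notin H^1$), so your route through Proposition~\ref{Pro5.2} is a valid and arguably cleaner alternative --- but it has to be executed, not merely cited; and note that the fact you do cite, $[z,y] \in \Z(H)$, is not what forces the correction $\zeta$ to be central (that comes from requiring $[y', H] = 0$).
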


\begin{proof}
	We will prove that $1 \Rightarrow 2 \Rightarrow 3 \Rightarrow 1$.
	\begin{enumerate}
		\item[] \hspace{-1.2cm} $\left(1 \Rightarrow 2\right)$ Assume that $L = L_1 \oplus L_2$. 
		Then $H = H_1 \oplus H_2 \co L_1^1 \oplus L_2^1$, where $L_1 \in \mathrm{Lie}(m_1+1,m_1)$, 
		$L_2 \in \mathrm{Lie}(m_2+1,m_2)$ and $m_1 + m_2 = n$. 
		We assert that the case in which $m_1, m_2 \geq 1$ cannot happen.
		
		In fact, in that case, without loss of generality, we can assume that
		\[
			\begin{array}{l l}
				H_1 = \s \{x_1, \ldots, x_{m_1}\}, & H_2 = \s \{x_{m_1+1}, \ldots, x_n\}.
			\end{array}
		\]
		Then we can always supplement two elements $z', y' \in L \setminus H$ such that
		\[
			L = \left(\R z' \oplus_D H_1\right) \oplus \left(\R y' \oplus_{D'} H_2\right).
		\]
		Because $L_1 \in \mathrm{Lie}(m_1+1,m_1)$, there exists $x_{i_0} \in H_1 $ such that $[z', x_{i_0}] \notin H_1^1$ 	
		(similarly, we can also choose $x_{i_0} \in H_2 $ such that $ [y', x_{i_0}] \notin H_2^1$). Put
		\[
			\begin{cases}
				z' = a_1z + b_1y + u_1, & u_1 \in H, \\
				y' = a_2z + b_2y + u_2, & u_2 \in H.
			\end{cases}
		\]
		Note that we must have $a_1a_2 \neq 0$ since $L_1 \in \mathrm{Lie}(m_1+1,m_1)$ and $L_2 \in \mathrm{Lie}(m_2+1,m_2)$. Then
		\[
			[z, x_{i_0}] = \frac{1}{a_1} \left([z', x_{i_0}] - [u_1, x_{i_0}]\right) \notin H_1^1
		\]
		which implies $[y', x_{i_0}] = a_2 [z, x_{i_0}] + [u_2, x_{i_0}] \neq 0$. This is a contradiction. 
		
		The above contradictions show that $m_1 = 0$ or $m_2 = 0$, i.e. $L \cong \R \oplus \bl$. 
		It is obvious that $\bl \in \Li$ and $\bl^1 = L^1 = H$.

		\item[] \hspace{-1.2cm} $\left(2 \Rightarrow 3\right)$ Assume that $L = \R z' \oplus \bl \co \R z' \oplus \left(\R y' \oplus_{d'} H\right)$ 
		with $\bl \in \Li$ and $\bl^1 = L^1 = H$. Put
		\[
			\begin{cases}
				z' = \alpha_1z + \beta_1y + v_1, & v_1 \in H, \\
				y' = \alpha_2z + \beta_2y + v_2, & v_2 \in H.
			\end{cases}
		\]
		Then $\alpha_2 \neq 0$ since on the contrary, it conflicts with $\bl \in \Li$. 
		
		By similar arguments as above, there exists $x_{i_0} \in H$ such that $[y', x_{i_0}] \notin H^1$. 
		So $[z, x_{i_0}] = \frac{1}{\alpha_2} \left([y', x_{i_0}] - [v_2, x_{i_0}]\right) \notin H^1$. Then
		\[
			\begin{array}{l l l}
				0 = [z', x_{i_0}] = \alpha_1[z, x_{i_0}] + [v_1, x_{i_0}]
				& \Leftrightarrow & \begin{cases} \alpha_1 = 0 \\ [v_1, x_{i_0}] = 0. \end{cases}
			\end{array}
		\]
		This implies that $[v_1, x] = [z', x] = 0$ for all $x \in H$, i.e. $v_1 \in \Z(H)$.
		
		Now, $\beta_1\alpha_2 \neq 0$ because $z' \in \bl$ if $\beta_1\alpha_2 = 0$. Therefore, we have
		\[
			0 = [z', y'] = [\beta_1y + v_1, \alpha_2z + \beta_2y + v_2] = -\beta_1\alpha_2[z, y] - \alpha_2[z, v_1]
		\]
		which leads to $[z, y] = -\frac{1}{\beta_1}[z, v_1] = -\frac{1}{\beta_1} d(v_1) \in d(\Z(H))$.
		
		\item[] \hspace{-1.2cm} $\left(3 \Rightarrow 1\right)$ If $[z, y] = x \in d(\Z(H))$ then there exists $x' \in \Z(H)$ 
		such that $d(x') = [z, x'] = x$. By changing $y' = y - x'$ we have $[z, y'] = 0$ and $[y', u] = 0$ for all $u \in H$. 
		This means that
		\[
			L = \R y' \oplus \s \{x_1, \ldots, x_n, z\} \co \R y' \oplus \bl,
		\]
		i.e. $L$ is decomposable.
	\end{enumerate}
	Finally, if $H = \R^n$ the $L$ is decomposable if and only if $[z, y] \in d(\Z(\R^n)) = d(\R^n)$. 
	By Proposition \ref{Pro4.1}, we have
	\[
		\begin{array}{l l l}
			L \in \lia & \Leftrightarrow & d(\R y \oplus \R^n) = \R^n \\
			& \Leftrightarrow & \s \{[z, y]\} + d(\R^n) = \R^n \\
			& \Leftrightarrow & d(\R^n) = \R^n \quad (\text{since } [z, y] \in d(\R^n)) \\
			& \Leftrightarrow & \text{$d|_{\R^n} = a_z$ is nonsingular}.
		\end{array}
	\]
	The proof of Proposition \ref{Pro5.3} is complete.
\end{proof}

Now, we formulate the desired isomorphic condition on {\Lntwoa} in Proposition \ref{Pro5.4} below. 

%Proposition 5.4
\begin{prop}\label{Pro5.4}
	Let
	\[
		\begin{array}{l l}
			L_i = \R z \oplus_{d_i} K = \R z \oplus_{d_i} (\R y \oplus H) \in \lia, & i = 1, 2.
		\end{array}
	\]
	Then $L_1 \cong L_2$ if and only if $\bd_1 \sim_p \bd_2$, where $\bd_i$ are the equivalent classes of $d_i \in \D(K)/\ad(K) = \hk$.
\end{prop}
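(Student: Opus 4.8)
The plan is to mirror the strategy used for Theorem~\ref{Thm1} (through Proposition~\ref{Pro3.3}), now applied with the solvable base algebra $K = \R y \oplus H$ in place of a nilpotent one; Remark~\ref{Rem3.4} guarantees this is legitimate. The sufficiency direction is immediate: if $\bd_1 \sim_p \bd_2$ in $\D(K)/\ad(K)$, then by definition there exist $\alpha \neq 0$, $\sigma \in \Au(K)$ and $u \in K$ with $\sigma d_1 \sigma^{-1} = \alpha d_2 + \ad_u$, and Corollary~\ref{Cor2.9} (applicable since $K$ is solvable) yields $L_1 \cong L_2$ at once.

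For necessity, suppose $\ts \colon L_1 \to L_2$ is an isomorphism. Since isomorphisms preserve derived algebras and $L_1^1 = L_2^1 = H$, we have $\ts(H) = H$; put $\sigma := \ts|_H \in \Au(H)$. Writing
\[
	\ts(y) = \alpha_1 z + \beta_1 y + u_1, \qquad \ts(z) = \alpha_2 z + \beta_2 y + u_2, \qquad u_1, u_2 \in H,
\]
the key step is to prove $\alpha_1 = 0$. To this end I would expand $0 = \ts([y, x]) = [\ts(y), \sigma(x)]$ for $x \in H$, using $[y, H] = 0$ and $[z, w] = d_2(w)$ in $L_2$; this collapses to $\alpha_1 d_2(w) + \ad_{u_1}(w) = 0$ for all $w \in H$. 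If $\alpha_1 \neq 0$, this would force $d_2|_H = -\alpha_1^{-1}\ad_{u_1}|_H$ to be an inner derivation of $H$, contradicting Proposition~\ref{Pro5.2}. Hence $\alpha_1 = 0$.

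With $\alpha_1 = 0$, the linear isomorphism induced by $\ts$ on the quotient $L/H$ has determinant $-\alpha_2\beta_1$, so $\alpha_2 \neq 0$ and $\beta_1 \neq 0$; in particular $\ts(y) = \beta_1 y + u_1 \in K$ with $\beta_1 \neq 0$. Combined with $\ts(H) = H$ this gives $\ts(K) = K$, so $\tau := \ts|_K$ is an automorphism of $K$ extending $\sigma$. I would then push the remaining relations $[z, w] = d_1(w)$ ($w \in K$) through $\ts$: using $[y, K] = 0$, $[z, \tau(w)] = d_2(\tau(w))$ and $[u_2, \tau(w)] = \ad_{u_2}(\tau(w))$ in $L_2$, the identity $\tau(d_1(w)) = [\ts(z), \tau(w)]$ simplifies to $\tau d_1 \tau^{-1} = \alpha_2 d_2 + \ad_{u_2}$ with $\alpha_2 \neq 0$ and $u_2 \in H \subseteq K$. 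This is precisely the proportional-similarity relation $\bd_1 \sim_p \bd_2$ in $\D(K)/\ad(K) = \hk$, completing the argument.

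The step I expect to be the crux is establishing $\alpha_1 = 0$, i.e.\ that $\ts$ must carry the codimension-one subalgebra $K$ into itself. Everything downstream is then bracket bookkeeping essentially identical to the {\Lnone} analysis; the genuinely new ingredient is Proposition~\ref{Pro5.2}, whose outerness conclusion is exactly what forbids the mixing of $z$ into $\ts(y)$.
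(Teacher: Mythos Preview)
Your proof is correct and follows essentially the same route as the paper's: establish $\ts(K)=K$ by showing the $z$-coefficient of $\ts(y)$ must vanish via Proposition~\ref{Pro5.2}, then conjugate by the restriction $\ts|_K$ exactly as in the codimension-one case. (The determinant on $L/H$ is $\alpha_2\beta_1$, not $-\alpha_2\beta_1$, but this sign slip is harmless for the conclusion $\alpha_2\neq 0$.)
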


\begin{proof}
	\begin{enumerate}
		\item[$\left(\Leftarrow\right)$] A direct application of Corollary \ref{Cor2.9}.
	
		\item[$\left(\Rightarrow\right)$] Let $\ts \colon \R z \oplus_{d_1} K \to \R z \oplus_{d_2} K$ be an isomorphism. 
		We claim that $K$ is $\ts$-invariant, i.e. $\ts(K) = K$. Indeed, set $\ts(y) = \alpha z + \beta y + u$ for $u \in H$. 
		Then for arbitrary $x \in H$, we have
		\[
			\begin{array}{l l}
				& \ts \left([y, \ts^{-1} (x)]\right) = [\ts(y), x] \\
				\Leftrightarrow & 0 = [\alpha z + \beta y + u, x] = \alpha [z, x ] + [u, x] \\
				\Leftrightarrow & \alpha a_z (x) = -\ad_u (x) \\
				\Leftrightarrow & \alpha = 0 \quad \text{(because $a_z$ is an outer derivation of $H$)}.
			\end{array}
		\]
		Since $K$ is $\ts$-invariant, $\sigma \co \ts|_K \colon K \to K$ is well-defined. 
		It is obvious that $\sigma$ is also an isomorphism. Finally, we set $\ts(z) = \alpha'z + v$ 
		where $\alpha' \neq 0$ and $v \in K$. Then for every $x \in K$, we have
		\[
			\begin{array}{l l}
				& \ts ([z, x]) = \left[\ts(z), \ts(x)\right] \\
				\Leftrightarrow & \sigma ([z, x]) = [\alpha'z + v, \sigma(x)] \\
				\Leftrightarrow & \sigma d_1(x) = (\alpha'd_2 + \ad_v) \sigma(x) \\
				\Leftrightarrow & \sigma d_1 \sigma^{-1} = \alpha'd_2 + \ad_v.
			\end{array}
		\]
		This means $\bd_1 \sim_p \bd_2$ in $\D (K)/\ad(K) = \hk$.	
	\end{enumerate}
	The proof of Proposition \ref{Pro5.4} is complete.
\end{proof}

\begin{pot3}
	As a direct consequence of Propositions \ref{Pro5.1}, \ref{Pro5.2} and \ref{Pro5.4}, 
	the problem of classifying {\Lntwoa} is equivalent to the problem of classifying equivalent 
	classes in $\D(K)/\ad(K) = \hk$ of outer derivations of $K$ which satisfy equivalent 
	conditions in Proposition \ref{Pro4.1} up to proportional similarity, where $H$ is an arbitrary 
	$n$-dimensional real nilpotent Lie algebra and $K = \R \oplus H$.\hfill{$\square$}
\end{pot3}

%Remark 5.5
\begin{rem}
	The problem of classifying {\Lntwoa} is essentially wild since it also requires the classification 
	of real nilpotent Lie algebras. However, we can determine a full classification of {\Lntwoa} 
	whenever a classification of $n$-dimensional real nilpotent Lie algebras is given.
\end{rem}

\subsection{Illustrative examples}

In the remaining of this section, we give classifications of {\Lntwoa} in low dimensions 
by using the technique proposed in the previous subsection. More concretely, we classify 
{\Lntwoa} for $n = 2$ in Example \ref{Ex5.6} and for $n = 3$ in Example \ref{Ex5.7}. 
In view of Proposition \ref{Pro4.3}, all decomposable Lie algebras in {\Lntwo} are directly reduced to 
$\mathrm{Lie} (m+1, m)$ for some $0 < m \leq n$. Therefore, we only pay attention to indecomposable ones.

%Example 5.6
\begin{ex}[The classification of $\mathrm{Lie}_\ad (4,2)$]\label{Ex5.6}
	Let $L \in \mathrm{Lie}_\ad(4,2)$. Since $L^1 = \R^2 = \s \{x_1, x_2\}$, we set
	\[
		L = L_d \co \R x_4 \oplus_d K = \R x_4 \oplus_d \left(\R x_3 \oplus \R^2\right)
	\]
	where $d \in \D(K) \setminus \ad(K)$ has the following form
	\[
		d = \begin{bmatrix} a_1 & b_1 & c_1 \\ a_2 & b_2 & c_2 \\ 0 & 0 & 0 \end{bmatrix} \neq 0.
	\]
	By Condition \ref{part4-Pro4.1} of Proposition \ref{Pro4.1}, $L_d \in \mathrm{Lie}_\ad (4,2)$ if and only if $\ra d = 2$. 
	To ensure that $L_d$ is indecomposable, it follows from Proposition \ref{Pro5.3} that $a_{x_4} = d|_{\R^2}$ is singular, 
	i.e. $\det \begin{bmatrix} a_1 & b_1 \\ a_2 & b_2 \end{bmatrix} = 0$. 
	The possible JCFs of $d$ with these conditions are as follows:
	\[
		\begin{array}{l l l}
			\begin{bmatrix} \lambda \\ & 0 & 1 \\ & & 0 \end{bmatrix} (\lambda \neq 0)
			\sim_p \begin{bmatrix} 1 \\ & 0 & 1 \\ & & 0 \end{bmatrix} \co A& \text{or} &
			\begin{bmatrix} 0 & 1 \\  & 0 & 1 \\ & & 0 \end{bmatrix} \co B.
		\end{array}
	\]
	By Proposition \ref{Pro5.4}, we have two Lie algebras $L_A$ and $L_B$ in $\mathrm{Lie}_\ad (4,2)$. 
	In particular, these results concise to that of \cite[Theorem 10, Part 1]{VTTTT19}, 
	namely, $L_A \cong \mathcal{G}_{4,1}$ and $L_B \cong \mathcal{G}_{4,2}$.
\end{ex}

%Example 5.7
\begin{ex}[The classification of $\mathrm{Lie}_\ad(5, 3)$]\label{Ex5.7}
	Let $L \in \mathrm{Lie}_\ad(5,3)$. As we have known, $L^1 = \R^3$ or $L^1 = \h_3$.
	\begin{enumerate}[\bf A.]
		\item Let $H = \R^3 = \s \{x_1, x_2, x_3\}$. In this case, we set
		\[
			L = L_d \co \R x_5 \oplus_d K = \R x_5 \oplus_d \left(\R x_4 \oplus \R^3\right)
		\]
		where $d \in \D(K) \setminus \ad(K)$ has the following form
		\[
			d = \begin{bmatrix} a_1 & b_1 & c_1 & e_1 \\ a_2 & b_2 & c_2 & e_2 \\ a_3 & b_3 & c_3 & e_3 \\ 0 & 0 & 0 & 0 \end{bmatrix} \neq 0.
		\]
		Similarly, by Condition \ref{part4-Pro4.1} of Proposition \ref{Pro4.1}, we must have $\ra d = 3$. 
		To ensure that $L_d$ is indecomposable, it follows from Proposition \ref{Pro5.3} that $a_{x_5} = d|_{\R^3}$ is singular, 
		i.e. $\det \begin{bmatrix} a_1 & b_1 & c_1 \\ a_2 & b_2 & c_2 \\ a_3 & b_3 & c_3 \end{bmatrix} = 0$. 
		The possible JCFs of $d$ with these conditions are as follows:
		\[
			\begin{array}{l l l}
				\begin{bmatrix} \lambda_1 \\ & \lambda_2 \\ & & 0 & 1 \\ & & & 0 \end{bmatrix} (0 < |\lambda_2| \leq |\lambda_1|)
					\sim_p \begin{bmatrix} 1 \\ & \lambda \\ & & 0 & 1 \\ & & & 0 \end{bmatrix} \co A(\lambda) \; (0 < |\lambda| \leq 1), \\
				\begin{bmatrix} \lambda & 1 \\  & \lambda \\ & & 0 & 1 \\ & & & 0 \end{bmatrix} (\lambda \neq 0) 
					\sim_p \begin{bmatrix} 1 & 1 \\ & 1 \\ & & 0 & 1 \\ & & & 0 \end{bmatrix} \co B, \\
				\begin{bmatrix} \lambda \\ & 0 & 1 \\ & & 0 & 1 \\ & & & 0 \end{bmatrix} (\lambda \neq 0) 
					\sim_p \begin{bmatrix} 1 \\ & 0 & 1 \\ & & 0 & 1 \\ & & & 0 \end{bmatrix} \co C, \\
				\begin{bmatrix} 0 & 1 \\ & 0 & 1 \\ & & 0 & 1 \\ & & & 0 \end{bmatrix} \co D, \\
			\end{array}
		\]
		
				\[
		\begin{array}{l l l}
		\begin{bmatrix} \lambda & 1 \\ -1 & \lambda \\ & & 0 & 1 \\ & & & 0 \end{bmatrix} (\lambda \neq 0)
		\sim_p \begin{bmatrix} |\lambda| & 1 \\ -1 & |\lambda| \\ & & 0 & 1 \\ & & & 0 \end{bmatrix} \co E(\lambda) \; (\lambda \geq 0). \\
		\end{array}
		\]
		
		Finally, it follows from Proposition \ref{Pro5.4} that we have five families of Lie algebras in 
		$\mathrm{Lie}_\ad (5,3)$ with derived algebra $\R^3$, namely, $L_{A(\lambda)}$, 
		$L_B$, $L_C$, $L_D$ and $L_{E(\lambda)}$.
		
		\item Let $H = \h_3 = \s \left\lbrace x_1, x_2, x_3 \, \big| \, [x_2, x_3] = x_1\right\rbrace$. In this case, we set
		\[
			L = L_d \co \R x_5 \oplus_d K = \R x_5 \oplus_d  \left(\R x_4 \oplus H\right),
		\]
		where $d \in \D(K) \setminus \ad(K)$ and $\td \colon K/H^1 \to K/H^1$ induced from $d$ are of the following forms:
		\[
			\begin{array}{l l}
				d = \begin{bmatrix} a+b & f & g & h \\ 0 & a & c & 0 \\ 0 & e & b & 0 \\ 0 & 0 & 0 & 0 \end{bmatrix}, &
				\td = \begin{bmatrix} a & c & 0 \\ e & b & 0 \\ 0 & 0 & 0 \end{bmatrix}.
			\end{array}
		\]
		First of all, we have
		\[
			\hk = \left\lbrace \bd = \begin{bmatrix} a+b & 0 & 0 & h \\ 0 & a & c & 0 \\ 0 & e & b & 0 \\ 0 & 0 & 0 & 0 \end{bmatrix}
			\Bigg| \, a, b, c, e, h \in \R \right\rbrace.
		\]
		Next, Condition \ref{part4-Pro4.1} of Proposition \ref{Pro4.1} implies
		\[
			\begin{array}{l l l l l}
				L_d \in \mathrm{Lie}_\ad (5,3) & \Leftrightarrow & \ra \td = 2 &
				\Leftrightarrow & \det \begin{bmatrix} a & c \\ e & b \end{bmatrix} \neq 0.
			\end{array}
		\]
		Now, $d(\Z(H)) = \s \{(a+b)x_1\}$ since $\Z(H) = \s \{x_1\}$. 
		It follows from Proposition \ref{Pro5.3} that $L_d$ is indecomposable if and only if
		\[
			\begin{array}{l l l}
				[z, y] = hx_1 \notin \s \{(a+b)x_1\}	& \Leftrightarrow & h \neq 0 = a+b.
			\end{array}
		\]
		The possible JCFs of $\bd \in \hk$ with these conditions are as follows:
		\[
			\begin{array}{l l l}
				\begin{bmatrix} 0 & & & h \\ & \lambda \\ & & -\lambda \\ & & & 0 \end{bmatrix} (\lambda \neq 0) \sim_p
					\begin{bmatrix} 0 & & & h \\ & 1 \\ & & -1 \\ & & & 0 \end{bmatrix} & \text{or} &
				\begin{bmatrix} 0 & & & h \\ & 0 & 1 \\ & -1 & 0 \\ & & & 0 \end{bmatrix}.
			\end{array}
		\]
		We normalize $h = 1$ in the first JCF by scaling $x'_1 = hx_1$ and $x'_3 = hx_3$.
		The same thing also occurs in the second JCF by scaling 
		$x'_1 = hx_1$, $x'_2 = \sqrt{h}x_2$, $x'_3 = \sqrt{h}x_3$ when $h>0$, and changing 
		$x'_1 = hx_1$, $x'_2 = \sqrt{-h}x_3$, $x'_3 = \sqrt{-h}x_2$ when $h < 0$. 
		 Finally, by Proposition \ref{Pro5.4}, we obtain two Lie algebras $L_F$ and 
		 $L_G$ in $\mathrm{Lie}_\ad(5,3)$ with derived algebra $H = \h_3$, where
		\[
			\begin{array}{l l l}
				F \co \begin{bmatrix} 0 & & & 1 \\ & 1 \\ & & -1 \\ & & & 0 \end{bmatrix} & \text{and} &
				G \co \begin{bmatrix} 0 & & & 1 \\ & 0 & 1 \\ & -1 & 0 \\ & & & 0 \end{bmatrix}.
			\end{array}
		\]
	\end{enumerate}
\end{ex}

\begin{rem}
	We summarize intersections between the classification of {\Lntwoa} and 
	Mubarakzyanov \cite{Mub63a,Mub63b} as in Table \ref{tab2}.
	The table suggests that our classification is more compact (cf. also \cite[Part 4]{SW14} for more details).
	%Table 2
	\begin{table}[!h]
		\centering
		\begin{tabular}{c c c l c c l c}
			\hline\noalign{\smallskip}
			\rotatebox[origin=c]{90}{Examples} & \rotatebox[origin=c]{90}{$\dim L$} & \rotatebox[origin=c]{90}{$[L, L]$} 
			& \rotatebox[origin=c]{90}{Types} & \rotatebox[origin=c]{90}{Notes} & \rotatebox[origin=c]{90}{\cite[\S 5]{Mub63a}} 
			& \rotatebox[origin=c]{90}{\cite[\S 10]{Mub63b}} & \rotatebox[origin=c]{90}{Notes} \\
			\noalign{\smallskip}\hline\noalign{\smallskip}
			\multirow{2}{*}{\ref{Ex5.6}} & \multirow{2}{*}{4} & \multirow{2}{*}{$\R^2$} & $L_A$ & & $g_{4,1}$ & & \\
			& & & $L_B$ & & $g_{4,3}$ & & \\
			\noalign{\smallskip}\hline\noalign{\smallskip}
			\multirow{7}{*}{\ref{Ex5.7}} &\multirow{7}{*}{5} & \multirow{5}{*}{$\R^3$} & $L_{A(\lambda)}$ & $0 < |\alpha| \leq 1$ 
			& & $g_{5,8}^\gamma$ & $0 < |\gamma| \leq 1$ \\
			& & & $L_B$ & & & $g_{5,15}^{0}$ & \\
			& & & $L_C$ & & & $g_{5,10}$ & \\
			& & & $L_D$ & & & $g_{5,2}$ & \\
			& & & $L_{E(\lambda)}$ & $|\lambda| \geq 0$ & & $g_{5,14}^p$ & \\
			\noalign{\smallskip}\cline{3-8}\noalign{\smallskip}
			& & \multirow{2}{*}{$\h_3$} & $L_F$ & & & $g_{5,20}^{-1}$ & \\
			& & & $L_G$ & & & $g_{5,26}^{\epsilon0}$ & $\epsilon = \pm 1$ \\
			\noalign{\smallskip}\hline
		\end{tabular}
		\caption{{\Lntwoa} and Mubarakzyanov \cite{Mub63a,Mub63b} in low dimensions.}\label{tab2} 
	\end{table}
\end{rem}

%%%Acknowledgements
%\section*{Acknowledgements} 
%The authors would like to thank the University of Economics and Law, Vietnam National University -- Ho Chi Minh City; 
%University of Education of Ho Chi Minh City, Hoa Sen University and 
%University of Physical Education and Sports of Ho Chi Minh City, Viet Nam.

%%%Bibliography

\end{document}